\def\volume{\operatorname{vol}}
\def\op{\operatorname}
\def\svolball#1#2{{\volume(\underline B_{#2}^{#1})}}
\begin{document}

\newtheorem{Thm}{Theorem}[section]
\newtheorem{Def}[Thm]{Definition}
\newtheorem{Lem}[Thm]{Lemma}
\newtheorem{Rem}{Remark}[section]

\newtheorem{Cor}[Thm]{Corollary}
\newtheorem{sublemma}{Sub-Lemma}
\newtheorem{Prop}[Thm]{Proposition}
\newtheorem{Example}{Example}[section]
\newcommand{\g}[0]{\textmd{g}}
\newcommand{\pr}[0]{\partial_r}
\newcommand{\dif}{\mathrm{d}}
\newcommand{\bg}{\bar{\gamma}}
\newcommand{\md}{\rm{md}}
\newcommand{\cn}{\rm{cn}}
\newcommand{\sn}{\rm{sn}}
\newcommand{\seg}{\mathrm{seg}}

\newcommand{\Ric}{\mbox{Ric}}
\newcommand{\Iso}{\mbox{Iso}}
\newcommand{\ra}{\rightarrow}
\newcommand{\Hess}{\mathrm{Hess}}
\newcommand{\RCD}{\mathsf{RCD}}

\title{Quantitative maximal volume entropy rigidity on Alexandrov spaces}
\author{Lina Chen}
\address[Lina Chen]{Department of mathematics, Nanjing University, Nanjing China}

\email{chenlina\_mail@163.com}
\thanks{Supported by the NSFC 12001268 and a research fund from Department of Mathematics in Nanjing University.} 

\maketitle

\begin{abstract}

\setlength{\parindent}{10pt} \setlength{\parskip}{1.5ex plus 0.5ex
minus 0.2ex} 
  We will show that the quantitative maximal volume entropy  rigidity holds on Alexandrov spaces. More precisely, given $N, D$, there exists $\epsilon(N, D)>0$, such that for $\epsilon<\epsilon(N, D)$, if $X$ is an $N$-dimensional Alexandrov space with curvature $\geq -1$, $\op{diam}(X)\leq D, h(X)\geq N-1-\epsilon$, then $X$ is Gromov-Hausdorff close to a hyperbolic manifold. This result extends the quantitive maximal volume entropy rigidity of  \cite{CRX} to Alexandrov spaces. And we will also give a quantitative maximal volume entropy rigidity for $\op{RCD}^*$-spaces in the non-collapsing case.
  
  \end{abstract}
\section{Introduction}

Volume entropy is a geometric invariant which measures the asymptotic exponential growth rate of the volumes of metric balls in the universal cover of a compact manifold. Precisely, for $M$ a compact $N$-manifold, the volume entropy of $M$ is defined as
$$h(M)=\lim_{R\to \infty}\frac{\ln\volume(B_R(\tilde p))}{R},$$
where $\tilde p\in \tilde M$, the universal cover of $M$. By \cite{Ma}, the limit always exists and independent of $\tilde p$. 

For a compact $N$-manifold $M$ with $\op{Ric}_M\geq -(N-1)$, by Bishop volume comparison, $h(M)\leq N-1$. And when $h(M)=N-1$, Ledrappier-Wang \cite{LW} showed that $M$ is isometric to a hyperbolic manifold. We call this result the maximal volume entropy rigidity. When $h(M)$ is close to $N-1$, in \cite{CRX}, with Rong and Xu, we showed the quantitative maximal volume entropy rigidity: $M$ is diffeomorphic and is Gromov-Hausdorff close to a hyperbolic manifold. For non-smooth metric spaces, in \cite{Ji}, Jiang generalized the maximal volume entropy rigidity to Alexandrov spaces; Later,  in \cite{CDNPSW}, Conell-Dai-N\'u\~ne.Zimbr\'on-Perales-Su\'arez.Serrato-Wei showed that  for a $\RCD^*(-(N-1), N)$-space $(X, d, m)$ the maximal volume entropy rigidity holds and they also pointed out that the quantitative maximal volume entropy rigidity is right if in addition the systole has uniform lower bound, i.e., $\inf\{\tilde d(\tilde x, \gamma\tilde x), \tilde x\in \tilde X, \gamma\in \bar\pi_1(X)\setminus\{e\}\}\geq l>0$, where $(\tilde X, \tilde d, \tilde m)$ is the universal cover of $(X, d, m)$ (the existence is proved in \cite{MW}, see Theorem 2.9), and $\bar \pi_1(X)$ is the deck transform group.  At the same time they conjecture that the systole condition is not necessary. For a compact $\RCD^*(K, N)$ space $(X, d, m)$, the volume entropy is defined as 
$$h(X)=\limsup_{R\to \infty}\frac{\ln \tilde m(B_R(\tilde x))}{R},$$
where $\tilde x\in \tilde X$ and this limit is independent  of $\tilde x$ and $\tilde m$ (cf. \cite{Rev, BCGS}).

In this note, we will show the quantitative maximal volume entropy rigidity on Alexandrov spaces:

\begin{Thm}
Given $N>1, D>0$, there exists $\epsilon(N, D)>0$ such that for $0<\epsilon<\epsilon(N, D)$, if a compact $N$-dimensional Alexandrov space $X$ with curvature $\geq -1$ satisfies that
$$h(X)\geq N-1-\epsilon, \quad \op{diam}(X)\leq D,$$
then $X$ is $\Psi(\epsilon | N, D)$-Gromov-Hausdorff close to an $N$-dimensional hyperbolic manifold, where $\Psi(\epsilon | N, D)\to 0$ as $\epsilon\to 0$ and $N, D$ fixed. 
\end{Thm}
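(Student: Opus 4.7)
The plan is to argue by contradiction, combining Gromov-Hausdorff precompactness for Alexandrov spaces with Jiang's maximal volume entropy rigidity \cite{Ji}. Suppose the conclusion fails: then there exist $\epsilon_0 > 0$ and $N$-dimensional Alexandrov spaces $X_i$ of curvature $\geq -1$ with $\op{diam}(X_i) \leq D$ and $h(X_i) \geq N-1-\epsilon_i$, $\epsilon_i \to 0$, yet no $X_i$ is $\epsilon_0$-close to any hyperbolic $N$-manifold. By Gromov's precompactness theorem, after passing to a subsequence $X_i \to X_\infty$ in the Gromov-Hausdorff topology, where $X_\infty$ is a compact Alexandrov space of curvature $\geq -1$ and diameter $\leq D$. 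The goal is to show that $X_\infty$ is isometric to a hyperbolic $N$-manifold, contradicting the choice of $\{X_i\}$.

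Two things must be established about the limit: that it is non-collapsed ($\dim X_\infty = N$) and that $h(X_\infty) \geq N-1$. Non-collapsing will follow once the entropy bound has been propagated, since on any $n$-dimensional Alexandrov space of curvature $\geq -1$, Bishop-Gromov forces $h \leq n-1$, so $n < N$ would be incompatible with $h(X_\infty) \geq N-1$. To propagate the entropy I would pass to universal covers: choosing lifts $\tilde x_i \in \tilde X_i$ and applying Fukaya's equivariant Gromov-Hausdorff theory, extract a limit $(\tilde X_i, \tilde x_i, \bar \pi_1(X_i)) \to (\tilde Y, \tilde y, G)$ with $\tilde Y / G = X_\infty$. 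The diameter bound guarantees each $\bar \pi_1(X_i)$ acts cocompactly with fundamental domains of uniformly bounded size, so by the Milnor-\v{S}varc comparison $h(X_i)$ coincides with the exponential growth rate of the orbit $\bar \pi_1(X_i) \cdot \tilde x_i$ in $\tilde X_i$. Transferring this orbit count to the limit, together with the Bishop-Gromov comparison on $\tilde Y$, yields $h(X_\infty) \geq N-1$. Bishop-Gromov then forces equality, and Jiang's theorem \cite{Ji} identifies $X_\infty$ with a hyperbolic $N$-manifold.

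The most delicate step is precisely this entropy propagation, because no systole lower bound is assumed: the kernel of the projection $\bar \pi_1(X_i) \to G$ may be nontrivial in the limit, so $G$ could in principle have strictly smaller exponential growth than $\bar \pi_1(X_i)$. To control this one has to show that elements of $\bar \pi_1(X_i)$ collapsing to the identity contribute only a subexponential portion to the ball-volume on $\tilde X_i$: the collapsed part acts on $\tilde Y$ with arbitrarily small orbit at $\tilde y$, and cocompactness of $G$ on $\tilde Y$ together with a uniform Bishop-Gromov comparison on the covers prevents such elements from generating the leading exponential. Handling this collapsing of the deck groups is the essential obstacle; it is what distinguishes the present theorem from the $\RCD^*$ statement in \cite{CDNPSW}, which imposed a uniform systole bound, and it is where the Alexandrov structure (good behavior under noncollapsed limits, and Bishop-Gromov on the cover) is used most heavily.
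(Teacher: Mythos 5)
Your overall architecture (contradiction, GH precompactness, identify the limit, invoke a rigidity theorem) is reasonable, but the step you yourself flag as "the most delicate" --- propagating $h(X_i)\geq N-1-\epsilon_i$ to $h(X_\infty)\geq N-1$ --- is not an argument, only a statement of intent, and it is precisely the point where this approach is not known to work. The difficulty is exactly the one you name: elements of $\bar\pi_1(X_i)$ with displacement tending to $0$ may carry the entire exponential growth of the orbit $\bar\pi_1(X_i)\cdot\tilde x_i$, and there is no mechanism in your sketch that rules this out. The assertion that "cocompactness of $G$ on $\tilde Y$ together with a uniform Bishop--Gromov comparison on the covers prevents such elements from generating the leading exponential" is unsupported; Bishop--Gromov on $\tilde X_i$ controls volumes, not orbit multiplicities, and when $\tilde X_i$ collapses the volume and the orbit count decouple. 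This is exactly why \cite{CDNPSW} had to assume a uniform systole lower bound to get the quantitative statement, and why the present paper does \emph{not} attempt to show that the quotient limit $X_\infty$ has maximal entropy. In short: your proof reduces the theorem to a lower semicontinuity statement for volume entropy under collapsing of the deck groups, which is essentially as hard as (and arguably equivalent to) the theorem itself.

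The paper's route avoids this entirely. Instead of pushing the entropy to the limit of the quotients and applying Jiang's rigidity there, it works on the universal covers: the almost-maximal entropy hypothesis is converted (Propositions 4.3 and 4.4) into quantitative functional data on $\tilde X_i$ --- distance-type functions $f_{n,R}$ with $-\kern-0.9em\int \Delta f_{n,R}\geq N-1-\Psi$ and almost-extendability of geodesics --- which passes to the equivariant limit $(\tilde X,\tilde p, G)$ and, via the $c$-concavity argument and the functional splitting theorem of \cite{CDNPSW}, forces $\tilde X\cong\Bbb H^k$ exactly. Discreteness of $G$ is then obtained not from entropy considerations but from the generalized Margulis lemma of Xu--Yao on Alexandrov spaces (this is where the Alexandrov hypothesis enters), $k=N$ and freeness come from the arguments of \cite{CRX} together with Theorem 3.1 and the blow-up analysis of Theorem 4.9, and only then is $X_\infty=\Bbb H^N/G$ identified as a hyperbolic manifold. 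If you want to salvage your approach, you would need to supply a genuine proof of the subexponential-contribution claim for the almost-kernel of $\bar\pi_1(X_i)\to G$; as it stands, that is the missing idea, and everything downstream of it (non-collapsing of $X_\infty$, applicability of Jiang's theorem) depends on it.
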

The idea of the proof of Theorem 1.1 is similar as Theorem D in \cite{CRX}. And from the proof below (Section 4), we could see that the idea can also be applied in $\RCD^*$-spaces.  In particular, for $\RCD^*$-spaces the quantitative maximal volume entropy rigidity holds in the non-collapsing case.

Recall that a $\op{RCD}^*(K, N)$-space $(X, d, m)$ is called non-collapsed if $m=\op{Haus}^N$, the $N$-dimensional Hausdorff measure and a sequence of non-collapsed $\RCD^*(K, N)$-spaces $(X_i, d_i, \op{Haus}^N, x_i)$ has a subsequence which is non-collapsed pointed measured Gromov-Hausdorff convergent  to $(X, d, \op{Haus}^N, x)\in \RCD^*(K, N)$ if and only if  $\op{Haus}^N(B_1(x_i))>v>0$ for some $v$ (see \cite[Definition 1.1, Theorem 1.2]{DePG}).
\begin{Thm} Given $N>1, D>0, v>0$, there exists $\epsilon(N, D, v)>0$ such that for $0<\epsilon<\epsilon(N, D, v)$, if a metric measure space $(X, d, \op{Haus}^N, x)\in \op{RCD}^*(-(N-1), N)$ satisfies 
$$h(X)\geq N-1-\epsilon, \quad \op{diam}(X)\leq D,\quad \op{Haus}^N(X)\geq v,$$
then $X$ is $\Psi(\epsilon | N, D, v)$-Gromov-Hausdorff close to an $N$-dimensional hyperbolic manifold.
\end{Thm}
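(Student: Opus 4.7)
The plan is to argue by contradiction via a compactness argument, following the strategy of Section 4 for Theorem 1.1. Suppose the conclusion fails: there exist $\epsilon_i\to 0$ and non-collapsed spaces $(X_i, d_i, \op{Haus}^N, x_i)\in \RCD^*(-(N-1), N)$ with $h(X_i)\geq N-1-\epsilon_i$, $\op{diam}(X_i)\leq D$, and $\op{Haus}^N(X_i)\geq v$, yet none of the $X_i$ is $\Psi(\epsilon_i|N,D,v)$-Gromov-Hausdorff close to any $N$-dimensional hyperbolic manifold. By the De Philippis--Gigli stability for non-collapsed $\RCD^*$ spaces cited in the excerpt, a subsequence converges in non-collapsed pointed mGH sense to a limit $(X_\infty, d_\infty, \op{Haus}^N, x_\infty)\in \RCD^*(-(N-1), N)$ satisfying $\op{diam}(X_\infty)\leq D$ and $\op{Haus}^N(X_\infty)\geq v$.

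Next I lift to universal covers. By \cite{MW}, each $X_i$ has a universal cover $(\tilde X_i, \tilde d_i, \tilde m_i)$ carrying the induced $\RCD^*$ structure, with the deck group $\bar \pi_1(X_i)$ acting by measure-preserving isometries. Passing to an equivariant pointed Gromov--Hausdorff subsequence, I obtain $(\tilde X_i, \tilde x_i, \bar \pi_1(X_i))\to (\tilde X_\infty, \tilde x_\infty, G_\infty)$ with $\tilde X_\infty/G_\infty = X_\infty$. The endgame is to establish $h(X_\infty)=N-1$: combined with the Bishop--Gromov upper bound this equality triggers the maximal entropy rigidity of \cite{CDNPSW}, forcing $X_\infty$ to be an $N$-dimensional hyperbolic manifold and contradicting the standing non-closeness assumption.

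The main obstacle is transferring the entropy lower bound to the limit, i.e., showing $h(X_\infty)\geq N-1$. My strategy is to re-express $h(X_i)$ via the orbit growth rate $\limsup_R R^{-1}\ln \#\{\gamma\in \bar \pi_1(X_i): \tilde d_i(\tilde x_i,\gamma \tilde x_i)\leq R\}$; this identification is valid because $\op{diam}(X_i)\leq D$ together with $\op{Haus}^N(X_i)\geq v$ forces $\tilde m_i(B_R(\tilde x_i))$ to be comparable, up to constants depending on $D$ and $v$, to the orbit count times $\op{Haus}^N(X_i)$. Under equivariant convergence, orbit counts pass in a lower-semicontinuous way to orbit counts for $G_\infty$ on $\tilde X_\infty$. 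The non-collapsing hypothesis here plays precisely the role of the systole assumption of \cite{CDNPSW}: it rules out sequences of nontrivial elements of $\bar \pi_1(X_i)$ whose translation length at $\tilde x_i$ shrinks to zero, since any such collapsing orbits would force $\op{Haus}^N(X_i)\to 0$ via Bishop--Gromov on the quotient, contradicting the uniform volume bound. This ensures $G_\infty$ inherits the full entropy growth rate of the $\bar\pi_1(X_i)$, closing the argument. The technical heart of the proof will therefore be a careful equivariant comparison between the quasi-fundamental pseudo-groups of short generators of $\bar\pi_1(X_i)$ and those of $G_\infty$, carried out as in Section 4 but using non-collapsed Bishop--Gromov in place of the Alexandrov volume comparison.
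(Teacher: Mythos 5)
Your proposal takes a genuinely different route from the paper, and as written it has two real gaps. The paper never passes the entropy itself to the limit: instead it converts the hypothesis $h(X_i)\geq N-1-\epsilon_i$ into quantitative conditions on the universal covers at finite $i$ (Proposition 4.3: an integral Laplacian lower bound for distance functions from far-away orbit points, plus almost-extendability of geodesics), passes \emph{those} to the limit (Proposition 4.4) to produce an exact function with $\Delta f=N-1$ and $|\nabla f|_w=1$, and then invokes the warped-product rigidity (Theorem 4.2) to get $\tilde X_\infty=\Bbb H^k$; discreteness and freeness of the limit action are handled separately in Section 4.2. Your plan instead tries to prove $h(X_\infty)\geq N-1$ directly by orbit counting and then cite the clean rigidity statement of \cite{CDNPSW} for the limit space.

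The first gap is the mechanism you give for lower semicontinuity of orbit counts. You claim that a nontrivial $\gamma_i\in\bar\pi_1(X_i)$ with $\tilde d_i(\tilde x_i,\gamma_i\tilde x_i)\to 0$ would force $\op{Haus}^N(X_i)\to 0$; this is not justified and is false in the form stated --- a single short element of order two costs only a factor of two in volume, and the paper itself points out (citing \cite[Remark 6.2]{PW}) that non-collapsed sequences can have systole tending to zero. What non-collapsing actually buys, via Bishop--Gromov and $\op{diam}(X_i)\leq D$, is the \emph{counting} bound $\#\{\gamma:\tilde d_i(\tilde x_i,\gamma\tilde x_i)\leq r\}\leq \svolball{-1}{r+D}/v$, which bounds the multiplicity with which orbit points can merge in the limit and therefore does preserve the exponential growth rate; your argument should be rephrased around this bound rather than around excluding short elements. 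The second, more serious gap is the endgame: what orbit counting yields is a lower bound on the growth rate of $G_\infty$ acting on $\tilde X_\infty$, which is \emph{not} the same as $h(X_\infty)$ unless you know that $\tilde X_\infty$ is the universal cover of $X_\infty$ with deck group $G_\infty$ --- i.e., that $G_\infty$ is discrete and acts freely. If $G_\infty$ has isotropy, $\tilde X_\infty\to X_\infty$ is not a covering, and $h(X_\infty)$ (defined through $X_\infty$'s own universal cover) can be strictly smaller than the orbit growth of $G_\infty$, so the rigidity theorem of \cite{CDNPSW} cannot be ``triggered'' on $X_\infty$. Establishing freeness is precisely one of the two main technical components of the paper's proof (Section 4.2, via the analogue of \cite[Theorem 2.1]{CRX} together with Theorem 4.9), and your proposal does not address it; until it is supplied, the contradiction at the end of your argument does not close.
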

To compare \cite[Theorem 1.4]{CDNPSW} and Theorem 1.2, note that a collapsing sequence may have systole uniform lower bound and a non-collapsing sequence may have systole going to 0 (see \cite[Remark 6.2]{PW}). Both may happen even for  Riemannian manifolds with uniform Ricci curvature lower bound. 

Since an $N$-dimensional Alexandrov space with curvature $\geq k$ is also an essentially non-branching $\RCD^*((N-1)k, N)$-space (\cite{Pe}, \cite{ZZ}, \cite{AGS}), in the following, we will discuss in the essentially non-branching $\RCD^*$-spaces except where we will use the generalized Margulis lemma proved by Xu-Yao \cite{XY} in Alexandrov spaces. 

Now, we briefly describe the things we need to do. First, we will point out that the equivariant Gromov-Hausdorff convergent structure results \cite[Theorem 3.10]{FY}  (see also \cite[Theorem 4.2]{FY1}) holds for metric measure space $\tilde X_i$ which is a universal cover of a compact $\RCD^*(K, N)$-space where they originally assume $\tilde X_i$ is simply connected (see section 3). Then we will give a almost warped product structure of the universal cover of a compact $\RCD^*(-(N-1), N)$-space which has almost maximal volume entropy. This result can be treated as a quantitative version of Theorem 1.2 in \cite{CDNPSW}: a functional type condition which implies an almost metric cone structure (see Section 4.1, Proposition 4.4).  Then for Theorem 1.1, we use Xu-Yao's generalized Margulis lemma in Alexandrov spaces to derive the discreteness of the limit group action. And last, notice that the free limit isometric action results (\cite[Theorem 2.1]{CRX}) holds in $\RCD^*(K, N)$-spaces, the proof is complete (see Section 4.2).

The author would like to thank Shicheng Xu for the recommendation of the topic of this note and Professor Guofang Wei's advisement about Theorem 1.2. 

\section{Prelminaries}
In this section, we will supply some notions and properties we need in the proof of Theorem 1.1 and Theorem 1.2. In the following, we always assume a metric measure space $(X, d, m)$ satisfies that the geodesic space $(X, d)$ is complete, separable and locally compact and $m$ is a nonnegative Radon measure with respect to $d$ and finite on bounded sets. We refer reader to the survey \cite{Am} for an overview of the topic and bibliography about curvature-dimension bounds in metric measure spaces.

\subsection{Calculus tools in metric measure spaces}

Consider a metric measure space $(X, d, m)$. A curve $\gamma: [0,1]\to X$ is called a constant speed geodesic if $d(\gamma(s), \gamma(t))=|s-t|d(\gamma(0), \gamma(1))$, for $s, t\in [0,1]$.
Let $\op{Geo}(X)$ be the class of constant speed geodesics in $X$.  Let $C([0, 1], X)$ be the space of continuous curves with weak convergence topology and let $\mathcal{P}(C([0,1], X))$ be the space of Borel probability measures of $C([0,1], X)$. For each $t\in [0, 1]$, define the evaluation map $e_t: C([0, 1], X) \to X$ by $e_t(\gamma)=\gamma(t)$. We say $\pi\in \mathcal{P}(C([0,1], X))$ is a test plan if there is a constant $c>0$, such that
$$(e_t)_*(\pi)\leq c m, \forall \, t\in [0, 1], \quad \int\int_0^1 |\dot\gamma(t)|dt d\pi(\gamma)<\infty,$$
where $|\dot\gamma(t)|=\lim_{h\to 0}d(\gamma(t+h), \gamma(t))/|h|$. Let  $S^2(X, d, m)$ be the set of $f: X\to \Bbb R$, such that there exists $G\in L^2(X, m)$,
 $$\int |f(\gamma(1))-f(\gamma(0))|d\pi(\gamma)\leq \int\int_0^1G(\gamma(t))|\dot\gamma(t)|dt d\pi(\gamma), \,   \forall \, \text{test plan } \pi, $$
where $G$ is called a weak upper gradient of $f$. Let $|\nabla f|_w$ be the minimal (in $m$-a.e. sense) weak upper gradient of $f$. The Cheeger energy defined on $S^2(X, d, m)$ can be written as 
$$Ch(f)=\frac{1}{2}\int |\nabla f|_w^2dm.$$
\begin{Def}
Let $W^{1,2}(X, d, m)=L^2(X, m)\cap S^2(X, d, m)$ endowed with the norm
$$\|f\|^2_{W^{1,2}}=\|f\|_{L^2}^2+ 2Ch(f).$$
We say $(X, d, m)$ is infinitesimally Hilbertian if $W^{1,2}(X, d, m)$ is an Hilbert space, i.e., the Cheeger energy is a quadratic form.
\end{Def}
In this subsection, we always assume that $(X, d, m)$ is an infinitesimally Hilbertian space. 

For an open subset $\Omega\subset X$, let $W^{1,2}_{\op{loc}}(\Omega)$ be the space of function $f: \Omega\to \Bbb R$ that locally equal to some function in $W^{1, 2}(X, d, m)$. For $f, g\in W^{1,2}_{\op{loc}}(\Omega)$, define
$$\Gamma(f, g)=\liminf_{\epsilon\to 0}\frac{|\nabla (g+\epsilon f)|_w^2-|\nabla g|_w^2}{2\epsilon}.$$
In fact $\Gamma(f, g)$ can be achieved by taking limit directly (cf. \cite{Gi2}). 
The map $\Gamma: W^{1,2}_{\op{loc}}(\Omega)\times W^{1,2}_{\op{loc}}(\Omega)\to L^1_{\op{loc}}(\Omega)$ is symmetric, bilinear and $\Gamma(f, f)=|\nabla f|_w^2$. Let $D(\Delta, \Omega)$ be the space of $f\in W^{1,2}_{\op{loc}}(\Omega)$ satisfies that there exists a Radon measure $\mu$ on $\Omega$ such that 
$$-\int\Gamma(f, g)=\int gd\mu$$
holds for any Lipschitz function $g: \Omega\to \Bbb R$, $\op{supp}g\subset\subset \Omega$. $\mu$ is unique and we call it the distributional Laplacian of $f$ and denote it by $\left.\Delta f\right|_{\Omega}$. By the definition, it is obvious that $D(\Delta, \Omega)$ is a vector space and the Laplacian is linear. If $f\in W^{1,2}(X, d, m)\cap  D(\Delta, X)$ and $\Delta f= hm$ for some $h\in L^2(X, m)$, we say $f\in D(\Delta)$ and denote $\Delta f=h$.
\begin{Prop}[\cite{Gi}]
Assume $(X, d, m)$ is infinitesimally Hilbertian. For an open set $\Omega\subset X$, a Lipschitz function $f: \Omega\to \Bbb R$, if there exists a Radon measure $\mu$ on $\Omega$ such that for any Lipschitz function $g: \Omega\to \Bbb R_{\geq 0}$ with $\op{supp}g\subset\subset \Omega$,
$$-\int\Gamma(f, g)dm\leq \int gd\mu,$$
then $f\in D(\Delta, \Omega)$ and $\Delta f\leq \mu$.
\end{Prop}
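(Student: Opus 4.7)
The plan is to exploit the positivity implicit in the hypothesis to build $\Delta f|_\Omega$ as a signed Radon measure via a Riesz representation argument. First I would introduce the linear functional
$$T(g) = \int_\Omega g\, d\mu + \int_\Omega \Gamma(f, g)\, dm$$
on the space of Lipschitz functions $g$ with $\operatorname{supp} g \subset\subset \Omega$. Bilinearity of $\Gamma$ and linearity of integration make $T$ linear, and the standing hypothesis is precisely that $T(g) \geq 0$ whenever $g \geq 0$.

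Next I would show $T$ is locally bounded in the sup-norm. For a compact $K \subset \Omega$ I choose a Lipschitz cutoff $\phi$ with $0 \leq \phi \leq 1$, $\phi \equiv 1$ on $K$, and $\operatorname{supp} \phi \subset\subset \Omega$. Then for any Lipschitz $g$ with $\operatorname{supp} g \subset K$, the two functions $\|g\|_\infty \phi \pm g$ are non-negative Lipschitz with compact support in $\Omega$, so positivity of $T$ yields $|T(g)| \leq \|g\|_\infty T(\phi)$. Since Lipschitz functions with compact support are uniformly dense in $C_c(\Omega)$ (by the inf-convolution $g_\lambda(x) = \inf_y\{g(y) + \lambda d(x,y)\}$, which converges uniformly to $g$ on compactly supported continuous functions as $\lambda \to \infty$, and preserves supports up to an arbitrarily small enlargement), $T$ extends uniquely to a positive linear functional on $C_c(\Omega)$. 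The Riesz–Markov–Kakutani theorem then produces a non-negative Radon measure $\nu$ on $\Omega$ with $T(g) = \int g\, d\nu$ for all $g \in C_c(\Omega)$.

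Tracing back, for every Lipschitz $g$ with compact support in $\Omega$ one obtains
$$-\int_\Omega \Gamma(f, g)\, dm = \int_\Omega g\, d(\mu - \nu),$$
so the signed Radon measure $\mu - \nu$ satisfies the defining property of the distributional Laplacian. Hence $f \in D(\Delta, \Omega)$ with $\Delta f|_\Omega = \mu - \nu$, which is $\leq \mu$ since $\nu \geq 0$.

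The main obstacle I anticipate is the extension step, namely checking that the positivity-only hypothesis really suffices to produce a measure rather than merely a distribution. This hinges on two points: availability of suitable Lipschitz cutoffs (guaranteed by the locally compact geodesic setting fixed at the start of the section), and the sup-norm density of compactly supported Lipschitz functions in $C_c(\Omega)$. Once these structural ingredients are in place, linearity and the $\phi$-domination inequality turn $T$ into a positive continuous functional on $C_c(\Omega)$, and the rest is classical measure theory.
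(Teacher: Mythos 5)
The paper itself gives no proof of this proposition --- it is quoted directly from \cite{Gi} --- and your argument is correct and is essentially the standard proof found there: positivity of the linear functional $g\mapsto \int_\Omega g\,d\mu+\int_\Omega\Gamma(f,g)\,dm$, local sup-norm boundedness via the cutoff domination $|T(g)|\leq \|g\|_\infty T(\phi)$, density of compactly supported Lipschitz functions in $C_c(\Omega)$, and the Riesz--Markov representation yielding $\Delta f=\mu-\nu\leq\mu$ with $\nu\geq 0$. No gaps.
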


\subsection{$\RCD^*(K, N)$-spaces} Here we quickly recall some basic definitions and properties of $\RCD^*(K, N)$-spaces. 

The notion of metric measure spaces with curvature bounded below dimension bounded above which denoted by $\op{CD}$-condition were introduce by Lott-Villani (\cite{LV}) and Sturm (\cite{St1, St2}) independently. 

For a metric measure space $(X, d, m)$, let $\mathcal{P}_2(X)$ be the space of Borel probability measures $\mu$ on $(X, d)$ satisfying $\int_X d(x_0, x)^2d\mu(x)<\infty $ for some $x_0\in X$. For $\mu, \nu\in \mathcal{P}_2(X)$, define
$$W_2(\mu, \nu)=\left(\inf\int\int_0^1|\dot\gamma(t)|^2dt d\pi(\gamma)\right)^{\frac12},$$
where the infimum is taken among all $\pi\in \mathcal{P}(C([0,1], X))$ with $(e_0)_*(\pi)=\mu$, $(e_1)_*(\pi)=\nu$. The minimal is  always exists and is concentrated on $\op{Geo}(X)$. We call the plan $\pi$ which achieves the minimal an optimal transportation and denote the set of optimal transportations by $\op{OpGeo}(\mu, \nu)$. In fact, $W_2$ is a distance on $\mathcal{P}_2(X)$, and $(\mathcal{P}_2(X), W_2)$ is a geodesic space provided $(X, d)$ is a geodesic space (cf. \cite{Vi}).

Given a function $\phi: X\to \Bbb R\cup \{-\infty\}$ not identically $-\infty$, its $c$-transform $\phi^c: X\to \Bbb R\cup \{-\infty\}$ is defined as 
$$\phi^c(x)=\inf_{y\in X}\frac{d^2(x, y)}{2}-\phi(y).$$
We call $\phi$ is $c$-concave if $\phi^{cc}=\phi$.

For $N\geq 1, K$, let $\sigma_{K, N}: [0, 1]\times \Bbb R^{+}\to \Bbb R$ be as
$$\sigma_{K, N}^{t}(\theta)=\left\{\begin{array}{cc}
+\infty, & K\theta^2\geq N\pi^2,\\
\frac{\sin(t\theta\sqrt{K/N})}{\sin(\theta\sqrt{K/N})}, & 0<K\theta^2<N\pi^2,\\
t, & K\theta^2=0,\\
\frac{\sinh(t\theta\sqrt{-K/N})}{\sinh(\theta\sqrt{-K/N})},& K\theta^2<0.\end{array}\right.$$
and let 
$$\tau_{K, N}^{t}(\theta)=t^{\frac1N}\sigma_{K, N-1}^t(\theta)^{\frac{N-1}{N}}.$$

\begin{Def}
Given $K\in \Bbb R, N\geq 1$, we say a metric measure space $(X, d, m)$ is a $\op{CD}(K, N)$-space if for any two measures $\mu_0, \mu_1\in \mathcal{P}_2(X)$ with bounded support which contains in $m$'s support, there exists $\pi\in \op{OpGeo}(\mu_0, \mu_1)$ such that for each $t\in [0, 1]$
$$-\int\rho_t^{1-\frac{1}{N}}dm\leq -\int \tau_{K, N}^{1-t}(d(\gamma(0),\gamma(1)))\rho_0^{-\frac{1}{N}}(\gamma(0))+\tau_{K, N}^t(d(\gamma(0),\gamma(1)))\rho_1^{-\frac{1}{N}}(\gamma(1))d\pi(\gamma),$$
where $(e_t)_{*}\pi=\rho_tm + \mu_t, \mu_t\bot m$. We call $(X, d, m)$ is a $\op{CD}^*(K, N)$-space if the above inequality holds for $\sigma^t_{K, N}$ instead of $\tau^t_{K, N}$.
\end{Def}

We say $(X, d, m)$ is essentially non-branching if for $\mu, \nu\in \mathcal{P}_2(X)$ with bounded support, each $\pi\in\op{OpGeo}(\mu, \nu)$ is concentrated on a Borel set of non-branching geodesics. In the following, we always assume a metric measure space is essentially non-branching. 

Let 
$$\op{sn}_H(r)=\left\{\begin{array}{cc} \frac{\sin\sqrt H r}{\sqrt H}, & H>0;\\
r, & H=0;\\
\frac{\sinh\sqrt{-H}r}{\sqrt{-H}}, & H<0.\end{array}\right.$$
If $\op{CD}(K, N)$ holds locally on a family of sets covering $X$, we call $X$ satisfies $\op{CD}_{\op{loc}}(K, N)$. A question is if $\op{CD}_{\op{loc}}(K, N)$ implies $\op{CD}(K, N)$. It is known by \cite{CM}, $\op{CD}_{\op{loc}}^*(K, N)$ is equal to $\op{CD}^*(K, N)$.  And at the local level $\bigcap_{K'<K}\op{CD}_{\op{loc}}^*(K', N)$ coincide with $\bigcap_{K'<K}\op{CD}_{\op{loc}}(K', N)$. Then by \cite{CS}, \cite{Oh}, the following holds
  
\begin{Thm}[Generalized Bishop-Gromov relative volume comparison]
Let $(X, d, m)$ be a $\op{CD}^*(K, N)$-space, $N>1$. For $x\in X$, $r>0$, let $s_m(x, r)=\limsup_{\delta\to 0}m(\overline{B_{r+\delta}(x)}\setminus B_r(x))/\delta$. The following holds

$$\frac{s_m(x, r)}{\op{sn}_H^{N-1}(r)} \text{ and }\frac{m(B_r(x))}{\int_0^r\op{sn}_H^{N-1}(t)dt}$$
are non-increasing in $r$, where $r<\frac{\pi}{\sqrt H}$, if $H=\frac{K}{N-1}>0$. 
\end{Thm}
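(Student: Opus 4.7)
The plan is to use the Brunn-Minkowski-type inequality that follows (under essentially non-branching) from $\op{CD}^{*}(K,N)$: for Borel sets $A_{0},A_{1}\subset\op{supp}m$ of positive measure, the set $Z_{t}$ of $t$-midpoints satisfies
$$m(Z_{t})^{1/N}\geq \tau_{K,N}^{1-t}(\Theta)\,m(A_{0})^{1/N}+\tau_{K,N}^{t}(\Theta)\,m(A_{1})^{1/N},$$
where $\Theta$ is an appropriate $\inf$ or $\sup$ of distances between $A_{0}$ and $A_{1}$ (depending on the sign of $K$). The $\tau_{K,N}$ coefficients (rather than the $\sigma_{K,N}$ that literally feature in the $\op{CD}^{*}$ definition) are available here because, as noted in the excerpt, $\bigcap_{K'<K}\op{CD}^{*}_{\op{loc}}(K',N)=\bigcap_{K'<K}\op{CD}_{\op{loc}}(K',N)$, so one may invoke the $\op{CD}(K',N)$ Brunn--Minkowski inequality locally and let $K'\uparrow K$.

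First I would apply this with $A_{0}=B_{\varepsilon}(x)$ and $A_{1}=B_{r+\delta}(x)\setminus B_{r}(x)$. Because $(X,d,m)$ is essentially non-branching and the Kantorovich potential transporting $\delta_{x}$ to any target is (up to additive constant) $\phi(y)=d(x,y)^{2}/2$, the optimal plan is carried by non-branching geodesics issuing radially from $x$; consequently, up to errors of order $\varepsilon$, $Z_{t}$ coincides with the rescaled annulus $B_{t(r+\delta)}(x)\setminus B_{tr}(x)$. Letting $\varepsilon\to 0$ kills the first BM term and gives
$$m\bigl(B_{t(r+\delta)}(x)\setminus B_{tr}(x)\bigr)^{1/N}\geq t^{1/N}\sigma^{t}_{K,N-1}(r)^{(N-1)/N}\,m\bigl(B_{r+\delta}(x)\setminus B_{r}(x)\bigr)^{1/N}.$$
Dividing by $(t\delta)^{1/N}$ and sending $\delta\to 0$ converts both sides to $N$-th roots of $s_{m}$, producing
$$s_{m}(x,tr)\geq \sigma^{t}_{K,N-1}(r)^{N-1}\,s_{m}(x,r)=\frac{\op{sn}_{H}^{N-1}(tr)}{\op{sn}_{H}^{N-1}(r)}\,s_{m}(x,r),\qquad H=\frac{K}{N-1},$$
which is the claimed monotonicity of $s_{m}(x,r)/\op{sn}_{H}^{N-1}(r)$. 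The monotonicity of $m(B_{r}(x))/\int_{0}^{r}\op{sn}_{H}^{N-1}(t)\,dt$ then follows from the elementary Cavalieri observation that $\bigl(\int_{0}^{r}f\bigr)/\bigl(\int_{0}^{r}g\bigr)$ inherits non-increasingness from $f/g$ when $f,g\geq 0$ are measurable.

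The main obstacle is the identification of $Z_{t}$ with the rescaled annulus in the limit $\varepsilon\to 0$: this requires the optimal transport from an approximation of $\delta_{x}$ to genuinely flow radially along non-branching geodesics, and is exactly where essentially non-branching is used. The domain restriction $r<\pi/\sqrt{H}$ when $H>0$ simply reflects the blow-up of $\sigma^{t}_{K,N-1}(r)$ at $r=\pi/\sqrt{H}$; these technical points are the heart of the proofs in \cite{CS}, \cite{Oh}.
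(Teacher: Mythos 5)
The paper does not actually prove this statement: it is quoted from the literature via the chain $\op{CD}^*(K,N)=\op{CD}^*_{\op{loc}}(K,N)$ (the globalization theorem of \cite{CM}), the local coincidence of $\bigcap_{K'<K}\op{CD}^*_{\op{loc}}(K',N)$ with $\bigcap_{K'<K}\op{CD}_{\op{loc}}(K',N)$, the implication ``local curvature-dimension implies measure contraction property'' of \cite{CS}, and Ohta's derivation of Bishop--Gromov from the measure contraction property in \cite{Oh}. Your argument is in substance a reconstruction of that same chain: applying Brunn--Minkowski with $A_0=B_\varepsilon(x)$ and letting $\varepsilon\to 0$ is precisely the measure-contraction estimate, and your $K'\uparrow K$ device for trading $\sigma$- for $\tau$-coefficients mirrors the paper's own remark. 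So the route is the standard one, written out rather than cited.

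Two soft spots. First, you do not need (and in general cannot obtain) the \emph{identification} of $Z_t$ with the rescaled annulus: the argument only uses the inclusion $Z_t\subset \overline{B}_{t(r+\delta)+(1-t)\varepsilon}(x)\setminus B_{tr-(1-t)\varepsilon}(x)$, which is an immediate triangle-inequality consequence of $Z_t$ consisting of $t$-points of geodesics from $A_0$ to $A_1$; invoking the radial structure of the optimal plan here is a red herring (essential non-branching is needed elsewhere, namely to make the localization machinery and the implication of \cite{CS} available). Second, and more seriously, the final ``Cavalieri'' step has a genuine gap: passing from the monotonicity of $s_m(x,r)/\op{sn}_H^{N-1}(r)$ to that of $m(B_r(x))/\int_0^r\op{sn}_H^{N-1}(t)\,dt$ via the Gromov ratio lemma requires $m(B_r(x))=\int_0^r s_m(x,t)\,dt$, i.e.\ absolute continuity of $r\mapsto m(B_r(x))$, which is not automatic from the $\limsup$ definition of $s_m$ and is not addressed. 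The standard fix (as in \cite{St2}, \cite{Oh}) is to prove the annulus comparison for $m(A_{r_1,r_2}(x))/\int_{r_1}^{r_2}\op{sn}_H^{N-1}(t)\,dt$ directly from the integrated Brunn--Minkowski inequality \emph{before} differentiating; both the ball ratio and the surface ratio then follow. As written, your derivation of the second ratio from the first is incomplete.
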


\begin{Def} A metric measure space $(X, d, m)$ is a $\RCD^*(K, N)$-space  if it is an infinitesimally Hilbertian $\op{CD}^*(K, N)$-space.
\end{Def}
In $\RCD^*(K, N)$-space, by \cite{Gi1}, \cite{CMo}, we have that
\begin{Thm}[Generalized Laplacian comparison]
For $\op{RCD}^*(K, N)$-space $(X, d, m)$, let $o\in X$ and let $r(x)=d(x, o)$. Then $r\in D(\Delta, X\setminus\{o\})$ and 
$$\Delta r\leq  \sqrt{|K|(N-1)}\cdot\frac{\op{sn}'_H(r)}{\op{sn}_H(r)}m.$$
\end{Thm}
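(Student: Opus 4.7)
The plan is to derive the Laplacian comparison as a consequence of the Bishop--Gromov volume comparison (Theorem 2.4), using Proposition 2.2 to convert the resulting integral estimate into the required distributional inequality. The argument is standard in the RCD literature, so I would recast the proofs of Gigli and Cavalletti--Mondino rather than invent anything new.

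First I would reduce to an integral test: by Proposition 2.2 it suffices to prove, for every non-negative Lipschitz $g$ with $\op{supp}g\subset\subset \Omega := X \setminus \{o\}$, the bound
$$-\int \Gamma(r, g)\, dm \leq \int g \cdot C(r)\, dm, \qquad C(r) := \sqrt{|K|(N-1)}\, \frac{\op{sn}'_H(r)}{\op{sn}_H(r)}.$$
As preparation I would record that $|\nabla r|_w = 1$ $m$-a.e.\ on $\Omega$: the upper bound is immediate from $r$ being $1$-Lipschitz, and the lower bound uses the existence of enough geodesic plans toward $o$ in the infinitesimally Hilbertian setting. Next I would perform a chain-rule reformulation: pick a smooth increasing $F:(0,\infty)\to\mathbb{R}$ with $F'' + C\,F' = 0$, so that the RCD chain rule $\Delta F(r) = F''(r)\,m + F'(r)\,\Delta r$ makes the target bound $\Delta r \leq C(r)\,m$ equivalent to the distributional inequality $\Delta F(r) \leq 0$ on $\Omega$, i.e.\ to
$$\int F'(r)\, \Gamma(r, g)\, dm \;\geq\; 0 \qquad \text{for all such } g.$$

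To verify this last inequality I would invoke the 1D-localization (needle decomposition) theory of Cavalletti--Mondino, which in an essentially non-branching $\op{RCD}^*(K, N)$-space disintegrates $m$ along geodesic rays emanating from $o$, with each conditional measure of the form $h_q\, dt$ satisfying the 1D $\op{CD}^*(K,N)$ concavity on the ray. On each needle this concavity translates (via the Jacobi/Riccati inequality) into the one-variable Laplacian bound for $r$, and these fibrewise bounds reassemble into the required distributional inequality on $X$ after integration against the disintegration $\nu$ of the quotient. The main obstacle is setting up this localization correctly: it is where the \emph{essentially non-branching} hypothesis becomes crucial and where the full strength of the $\op{RCD}^*$ assumption (not merely $\op{CD}^*$) is used. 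As an alternative I could work directly with a BV coarea representation of $\Gamma(r, g)$ in terms of the surface-area measures on the level sets $\{r = t\}$, and appeal to the monotonicity of $s_m(o, t)/\op{sn}_H^{N-1}(t)$ from Theorem 2.4, integrating by parts in $t$ and using the ODE for $F$ to extract the sign.
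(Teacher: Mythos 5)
The paper does not actually prove this statement: Theorem 2.6 is quoted as a known result with a pointer to \cite{Gi1} and \cite{CMo}, so there is no internal proof to compare against. Your sketch is, in substance, a reconstruction of the Cavalletti--Mondino proof: reduce to the integral test supplied by Proposition 2.2, then verify it by disintegrating $m$ along the needles of the $1$-Lipschitz function $d(\cdot,o)$, where the one-dimensional $\op{CD}^*(K,N)$ concavity of the conditional densities $h_q$ integrates, via the Riccati inequality, to $(\log h_q)'(t)\le (N-1)\,\op{sn}'_H(t)/\op{sn}_H(t)$, and these fibrewise bounds reassemble into the distributional inequality. That is the right strategy and matches the cited source; the essentially non-branching hypothesis is indeed what makes the localization available, while infinitesimal Hilbertianity is what lets you phrase the conclusion through the bilinear $\Gamma$ of Proposition 2.2.

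Two cautions. First, the chain-rule reformulation is circular as written: the identity $\Delta F(r)=F''(r)\,m+F'(r)\,\Delta r$ presupposes $r\in D(\Delta,\Omega)$, which is part of what is to be proved. The correct order is to establish $F(r)\in D(\Delta,\Omega)$ with $\Delta F(r)\le 0$ first (or, in the localization route, to bound $\Delta r$ on each needle directly, in which case the auxiliary $F$ is unnecessary) and only then apply the inverse chain rule using $F'>0$, as in \cite{Gi1}; one must also check that $F'(r)=\op{sn}_H(r)^{-(N-1)}$ stays bounded on the support of each test function, which is why the statement is localized to $X\setminus\{o\}$. Second, your proposed alternative --- extracting the bound from the Bishop--Gromov monotonicity of $s_m(o,t)/\op{sn}_H^{N-1}(t)$ via the coarea formula --- does not suffice by itself: volume monotonicity centered at $o$ only controls the average of $\Delta r$ over annuli around $o$ (this is precisely the identity $\int_{A_{t_1,t_2}(\tilde p)}\Delta r=s_{\tilde m}(\tilde p,t_2)-s_{\tilde m}(\tilde p,t_1)$ that the paper exploits in Section 4), not the signed inequality against an arbitrary nonnegative test function supported off-center; recovering the full measure-valued bound requires either the needle decomposition or Gigli's optimal-transport argument in any case.
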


In $\RCD^*(K, N)$-space, the splitting theorem and volume cone rigidity have been proved:
\begin{Thm}[\cite{Gi}]
Let $(X, d, m)$ be a $\RCD^*(0, N)$-space, $N>1$. If $X$ contains a line, then $(X, d, m)$ splits, i.e., $(X, d, m)$ is isometric to $(\Bbb R\times Y, d_{\Bbb R}\times d', \op{Haus}^1\times m')$, where $\op{Haus^1(\cdot)}$ is $1$-dimensional Hausdorff measure and $(Y, d', m')\in \RCD^*(0, N-1)$.
\end{Thm}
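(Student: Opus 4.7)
The plan is to adapt the classical Cheeger--Gromoll splitting argument to the synthetic RCD setting, following Gigli's strategy. Given a line $\gamma:\mathbb R\to X$, I would define the two Busemann functions
$$b^\pm(x)=\lim_{t\to\infty}\bigl(t-d(x,\gamma(\pm t))\bigr).$$
The limits exist by monotonicity and the triangle inequality, the functions $b^\pm$ are $1$-Lipschitz, and $b^++b^-\geq 0$ with equality along $\gamma$.

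The first key step is to show that $b^\pm$ are harmonic. Applying the generalized Laplacian comparison (Theorem 2.6) with $K=0$, the distance functions $r_t(x)=d(x,\gamma(\pm t))$ satisfy $\Delta r_t\leq \frac{N-1}{r_t}\,m$ away from $\gamma(\pm t)$. Combining this with the variational criterion of Proposition 2.2 and passing to the limit as $t\to\infty$ gives $\Delta b^\pm\leq 0$ on $X$. Since $b^++b^-\geq 0$ vanishes on the line and satisfies $\Delta(b^++b^-)\leq 0$, a strong maximum principle in the infinitesimally Hilbertian setting forces $b^++b^-\equiv 0$, so that both $\Delta b^+$ and $\Delta b^-$ are the zero measure.

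The next step invokes the Bochner inequality, which holds on $\RCD^*(0,N)$ spaces. Applied to the harmonic function $b^+$ with $|\nabla b^+|_w\leq 1$, it yields $\tfrac12\Delta|\nabla b^+|_w^2\geq 0$ distributionally; together with $|\nabla b^+|_w=1$ along $\gamma$, the maximum principle forces $|\nabla b^+|_w\equiv 1$ on $X$. The equality case of the refined Bochner inequality (with the Hessian term, in Gigli's tangent-module calculus) then yields $\Hess b^+=0$. Finally I would construct the splitting via the gradient flow $F_s$ of $\nabla b^+$: set $Y=\{b^+=0\}$ with the induced metric-measure structure and define $\Phi(s,y)=F_s(y)$. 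Harmonicity of $b^+$ makes $F_s$ measure-preserving, $|\nabla b^+|_w\equiv 1$ makes $F_s$ translate level sets by exactly $s$, and $\Hess b^+=0$ makes $F_s$ an isometry on each level set, so $\Phi$ realizes the isometric product decomposition, and $(Y,d',m')\in \RCD^*(0,N-1)$ follows by tensorization of the synthetic curvature--dimension condition.

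The hard part will be the non-smooth rigidity analysis. Upgrading $\Delta b^\pm\leq 0$ to equality requires a strong maximum principle valid only in the infinitesimally Hilbertian setting, and the essential input $\Hess b^+=0$ comes not from Bochner as stated but from the improved Bochner identity with a Hessian term, which Gigli developed precisely for this application. Showing that the $\nabla b^+$-flow is genuinely an isomorphism of metric measure spaces — i.e.\ simultaneously measure-preserving, distance-preserving on level sets, and translating $b^+$ by $s$ — is where the full machinery of Sobolev calculus and second-order differentiation on metric measure spaces must be deployed; in the smooth case these facts are all immediate, but in the synthetic setting each one requires the Gigli framework.
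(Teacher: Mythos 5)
The paper does not actually prove this statement: it is quoted as Theorem 2.7 directly from Gigli \cite{Gi} and used as a black box, so the only available comparison is with the cited source. Your sketch is a faithful outline of Gigli's argument --- Busemann functions, Laplacian comparison plus a maximum principle to upgrade one-sided harmonicity to $\Delta b^\pm=0$, the improved Bochner inequality with Hessian term to force $|\nabla b^+|_w\equiv 1$ and $\Hess b^+=0$, and the gradient flow of $b^+$ to realize the isometric measure-preserving product --- and you correctly locate the genuinely hard points (the strong maximum principle, the second-order calculus behind the Hessian, and checking that the flow is simultaneously measure-, level-set-, and distance-preserving; note also that deducing $(Y,d',m')\in\RCD^*(0,N-1)$ is a separate dimension-reduction step rather than a direct tensorization). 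One small correction: with your convention $b^\pm(x)=\lim_{t\to\infty}\bigl(t-d(x,\gamma(\pm t))\bigr)$, the comparison $\Delta r_t\leq\frac{N-1}{r_t}\,m$ yields $\Delta b^\pm\geq 0$ and the triangle inequality yields $b^++b^-\leq 0$ with equality on $\gamma$, so the maximum principle is applied to a subharmonic function attaining an interior maximum; your two sign flips cancel and the conclusion is unaffected.
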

\begin{Thm}[\cite{PG}]
Let $(X, d, m)$ be a $\RCD^*(0, N)$-space, $x\in X$, $N>1$. If there are $R>r>0$ such that 
$$\frac{m(B_r(x))}{m(B_R(x))}=\left(\frac{r}{R}\right)^N,$$
then $B_R(x)$ is isometric to a ball $B_R(x^*)\subset (C(Z), x^*)$, where $C(Z)$ is a metric cone over $Z$, $Z\in \RCD^*(N-2, N-1)$, $x^*$ is the cone point.
\end{Thm}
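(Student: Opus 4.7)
The strategy is to promote the volume equality to a sharp Laplacian identity, upgrade that to a rigid Hessian identity via the $N$-Bochner inequality, and then reconstruct the cone structure from the gradient flow of $r^2/2$. Write $r(y)=d(x,y)$, $u(y)=r(y)^2/2$, and $A_{r,R}:=B_R(x)\setminus\overline{B_r(x)}$. By Theorem 2.5 with $K=0$ (so $\op{sn}_0(t)=t$), the ratio $s\mapsto m(B_s(x))/s^N$ is non-increasing on $(0,\infty)$; the hypothesis forces this ratio to be constant on $[r,R]$, and the surface version $s_m(x,s)/s^{N-1}$ is then constant on $(r,R)$. Combining this with the sharp Laplacian comparison $\Delta r\leq\frac{N-1}{r}m$ from Theorem 2.7 (pairing against nonnegative Lipschitz cutoffs supported in $A_{r,R}$ and integrating by parts) saturates the inequality, yielding $\Delta r=\frac{N-1}{r}$ and hence $\Delta u=N$ on $A_{r,R}$. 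Applying the same argument to all admissible pairs $0<r'<R'\leq R$ (the volume cone hypothesis propagates inward by monotonicity) promotes this to $\Delta u=N$ on all of $B_R(x)$.

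Next I substitute $u$ into the $N$-Bochner inequality on $\RCD^*(0,N)$,
$$\tfrac12\Delta|\nabla u|_w^2\geq\frac{(\Delta u)^2}{N}+\Gamma(u,\Delta u),$$
and observe that $|\nabla u|_w^2=r^2|\nabla r|_w^2=2u$ almost everywhere and that $\Delta u=N$ is constant; both sides therefore equal $N$, so equality holds throughout $B_R(x)$. I would then invoke the self-improvement of Bochner (Savar\'e, Gigli, Han) which produces a well-defined measure-valued Hessian $\Hess(u)$ on $\RCD^*$-spaces and whose equality case pins down the pointwise identity $\Hess(u)=g$ in the sense of metric-measure calculus; concretely this gives $\Gamma(u,\Gamma(u,f))-\tfrac12\Gamma(\Gamma(u,u),f)=\Gamma(f,u)$ for Lipschitz test functions $f$ on $B_R(x)$.

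From $\Hess(u)=g$ I extract the cone structure via the gradient flow $\Phi_t$ of $\nabla u$: since $\tfrac{d}{dt}u(\Phi_t y)=|\nabla u|^2(\Phi_t y)=2u(\Phi_t y)$, one gets $r(\Phi_t y)=e^t r(y)$, and the Hessian identity forces $\Phi_t$ to act as a dilation by $e^t$ on $B_R(x)\setminus\{x\}$ (integrate $\nabla_V\nabla u=V$ along any curve with tangent $V$). Fixing $t_0\in(0,R)$ and setting $Z:=\{r=t_0\}$ with the induced length metric rescaled by $1/t_0$, the assignment $(t,z)\mapsto\Phi_{\log(t/t_0)}(z)$ is an isometry of the truncated cone $[0,R)\times Z/(\{0\}\times Z)$ with warped metric $dt^2+t^2 d_Z^2$ onto $B_R(x)$, identifying it with a ball $B_R(x^*)\subset C(Z)$ whose tip is $x$. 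The assertion $Z\in\RCD^*(N-2,N-1)$ then follows from the cone--sphere characterization for RCD* spaces: a cone $C(Z)$ is $\RCD^*(0,N)$ if and only if its cross-section $Z$ is $\RCD^*(N-2,N-1)$. The main obstacle is the middle paragraph: making sense of a measure-valued Hessian in this non-smooth setting and deducing the pointwise rigidity $\Hess(u)=g$ from equality in the $N$-Bochner inequality requires the full self-improvement machinery of RCD* calculus, and the local-to-global passage from equality on $A_{r,R}$ to the whole ball $B_R(x)$ demands extra care near the tip $x$.
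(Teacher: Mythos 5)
First, a point of order: the paper does not prove this statement. It is Theorem~2.8, imported with the citation [PG] (De Philippis--Gigli, ``From volume cone to metric cone in the nonsmooth setting''), so there is no internal proof to compare against; your attempt has to be measured against that reference. Judged that way, your skeleton is the correct one and essentially coincides with the strategy of [PG]: volume rigidity $\Rightarrow$ $\Delta(r^2/2)=N$ $\Rightarrow$ equality in the dimensional Bochner inequality $\Rightarrow$ $\Hess(r^2/2)=g$ $\Rightarrow$ the flow of $\nabla(r^2/2)$ acts by dilations $\Rightarrow$ cone structure, with Ketterer's cone theorem identifying the cross-section as $\RCD^*(N-2,N-1)$. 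Your preliminary reductions are sound: the inward propagation of the volume identity to $(0,r)$ does hold, but it needs the surface-measure argument (monotonicity of $s_m(x,s)/s^{N-1}$ together with $m(B_r(x))=\int_0^r s_m(x,s)\,ds$ forces equality a.e.\ below $r$), not just monotonicity of the ball ratio; and the saturation of the Laplacian comparison on annuli is the same mechanism the paper itself uses in (4.6).

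The genuine gaps sit exactly where you flag them, and they are the load-bearing part of the theorem. The step ``integrate $\nabla_V\nabla u=V$ along any curve with tangent $V$'' has no meaning in an $\RCD^*$ space: the Hessian produced by self-improvement of Bochner is an $L^2$ (or measure-valued) object tested only against Sobolev vector fields, there is no pointwise second-order calculus along individual curves, and the flow $\Phi_t$ must first be constructed as a regular Lagrangian flow in the sense of Ambrosio--Trevisan (precisely the ingredient this paper alludes to after Proposition~4.4). Upgrading ``$\Phi_t$ exists a.e.\ and rescales the measure'' to ``$\Phi_t$ is an isometry up to dilation'' is the technical heart of [PG] and occupies most of that paper; it cannot be replaced by a one-line ODE argument. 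Two further points need care: the dimensional Bochner inequality must be localized to $u=r^2/2$ on $B_R(x)$ (the function is not in the global test class, so one needs cutoffs with bounded Laplacian, plus separate treatment near the tip), and the cross-section $Z$ must be shown to carry a well-defined intrinsic metric before Ketterer's characterization can be invoked. So: correct outline, same route as the cited source, but the analytic core is asserted rather than proved.
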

Note that for $\RCD^*(K, N)$-spaces, $K\neq 0$, there are corresponding volume cone rigidity results as in above theorem.

For a metric space $(X, d)$, we say a connected covering space $\tilde{\pi}: (\tilde X, \tilde d)\to (X, d)$ is a universal cover of $(X, d)$ if for any other covering $\pi: (Y, d')\to (X, d)$, there is a continuous map $f: \tilde X\to Y$, such that $\pi\circ f=\tilde{\pi}$. A universal cover of a metric space may not exist in general (see \cite[Example 17]{Sp}). In \cite{MW}, Mondino-Wei showed that each $\RCD^*(K, N)$-space has a universal cover.
\begin{Thm}[\cite{MW}]
If a metric measure space $(X, d, m)\in \RCD^*(K, N)$, $K\in \Bbb R, N\geq 1$, then $(X, d, m)$ has a universal cover space $(\tilde X, \tilde d, \tilde m)\in \RCD^*(K, N)$.
\end{Thm}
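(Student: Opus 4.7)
The plan is to construct $\tilde X$ as a stabilized Sormani--Wei $\delta$-cover and then transfer the synthetic curvature bound along the covering map.

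First, recall the $\delta$-cover construction. For each $\delta > 0$, define $\tilde X^\delta \to X$ to be the regular cover whose deck group is the quotient of the loop group at a basepoint by the normal subgroup generated by conjugates of loops contained in metric balls of radius $\delta$. Such a cover exists on any connected, locally path-connected length space, carries a natural length metric $\tilde d^\delta$ for which the projection is a local isometry, and receives a Borel measure $\tilde m^\delta$ obtained by locally pulling back $m$. As $\delta$ decreases these covers form a tower ordered by refinement. The decisive claim is \emph{stabilization}: there exists $\delta_0 > 0$ such that $\tilde X^\delta = \tilde X^{\delta_0}$ for every $0 < \delta \leq \delta_0$. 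Once stabilization is in hand, set $\tilde X := \tilde X^{\delta_0}$; it satisfies the universal property, since any connected cover $Y \to X$ trivializes on some Lebesgue scale $\delta$ and is therefore dominated by $\tilde X^\delta = \tilde X$.

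Next I would transfer the $\RCD^*(K,N)$ condition. Because $\tilde X \to X$ is a local isometry of metric measure spaces, every sufficiently small ball in $\tilde X$ is isomorphic as a metric measure space to a ball in $X$, so $\op{CD}^*_{\op{loc}}(K,N)$ holds on $\tilde X$. By the Cavalletti--Sturm local-to-global theorem recalled in the excerpt, this upgrades to $\op{CD}^*(K,N)$. Infinitesimal Hilbertianness is a local property of the Cheeger energy, detected through weak upper gradients on arbitrarily small balls, so it also lifts to $\tilde X$. Together these give $(\tilde X, \tilde d, \tilde m) \in \RCD^*(K,N)$.

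The main obstacle is stabilization, which I would argue by contradiction in the Sormani--Wei style. If it fails, produce $\delta_i \downarrow 0$ and nontrivial deck transformations $g_i$ on $\tilde X^{\delta_i}$ whose minimal displacement at basepoints $\tilde x_i$ tends to zero. Rescale so that the displacement becomes $1$; the rescaled spaces satisfy $\RCD^*(\epsilon_i, N)$ with $\epsilon_i \to 0$. Compactness and stability of $\RCD^*$ under pointed mGH convergence produce a limit $(\tilde Y, \tilde y) \in \RCD^*(0, N)$ on which a limit isometry $\bar g$ acts with translation length $1$. Iterating $\bar g$ yields a line in $\tilde Y$, so the splitting theorem recalled in the excerpt gives $\tilde Y = \Bbb R \times Y_0$ with $\bar g$ translating the $\Bbb R$-factor. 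A combinatorial argument tracing how short loops around $\tilde x_i$ lift through this splitting then contradicts the defining property of the $\delta_{i+1}$-cover. The splitting theorem is the crucial synthetic input, and some care is needed to ensure the limit element $\bar g$ is nontrivial and of infinite order --- typically handled by replacing $g_i$ by an appropriate power or by passing to a limit of cyclic subgroups.
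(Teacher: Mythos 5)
The paper itself gives no proof of this statement: it is quoted directly from Mondino--Wei \cite{MW}, so the comparison has to be with the argument there. Your overall architecture does match theirs. Both build the candidate universal cover as a stabilized Sormani--Wei $\delta$-cover (the paper's own proof of Lemma 2.10 confirms that $\tilde X$ is realized as some $\tilde X^{\delta}$), and both transfer the curvature condition by noting that the projection is a local isomorphism of metric measure spaces, so that the local $\op{CD}^*$ condition and infinitesimal Hilbertianity lift and the globalization theorem of \cite{CM} upgrades this to $\RCD^*(K,N)$. Two smaller caveats: your ``Lebesgue scale'' verification of the universal property only works for compact $X$, whereas the theorem is stated for arbitrary $\RCD^*(K,N)$-spaces (Mondino--Wei handle the general case with relative $\delta$-covers of balls); and the globalization step needs essential non-branching of the cover as an input, which itself requires a word of justification.

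The genuine gap is in your proof of stabilization, which is both different from \cite{MW} and incomplete. First, non-stabilization produces loops that are generated by loops in $\delta_i$-balls but not by loops in $\delta_{i+1}$-balls; converting this into deck transformations whose displacement at well-chosen basepoints tends to zero is precisely the content of the Halfway Lemma and is not automatic. Second, and more seriously, ``iterating $\bar g$ yields a line'' is false for a general isometry with translation length $1$ at a point: $\bar g$ may behave like a rotation, with $d(\tilde y, \bar g^{\,n}\tilde y)$ bounded for all $n$, in which case no line and no splitting are produced; in a general metric space, minimality of the displacement at $\tilde x_i$ does not give $d(\tilde x_i, g_i^n\tilde x_i)=n\,d(\tilde x_i,g_i\tilde x_i)$, and the axiality of $\bar g$ is exactly what would need to be proved. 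Third, even granting the splitting $\tilde Y=\Bbb R\times Y_0$, the ``combinatorial argument'' that is supposed to contradict the definition of the $\delta_{i+1}$-cover is left unspecified, and it is not clear what the contradiction would be. Mondino--Wei's stabilization argument avoids all of this: it is a volume-comparison argument (Halfway Lemma plus Uniform Cut Lemma, resting on the Bishop--Gromov inequality recorded here as Theorem 2.4), in which a short loop that remains nontrivial in the $\delta$-cover forces a definite volume defect at a definite scale, and infinitely many such scales contradict volume comparison; the splitting theorem is not the engine of that step. As written, the central step of your proposal does not close.
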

In the above theorem, we take $\tilde d, \tilde m$ such that $\tilde \pi: \tilde X\to X$ is distance and measure non-increasing and is a local isometry (the existence of $\tilde d$ see \cite{Ri}). 

For a metric space $(X, d)$ which has a universal cover $(\tilde X, \tilde d)$, let $\bar \pi_1(X)$ be the revised fundamental group of $X$, i.e., the deck transformations of $\tilde X$ and $\bar\pi_1(X)$ acts on $(\tilde X, \tilde d)$ isometrically. For $\bar\pi_1(X)$ we have that: (i) for each $\alpha\in \bar \pi_1(X)$, there is $\gamma\in \pi_1(X, x)$ which can be treated as a deck transformation on $\tilde X$ and denote it by $\Phi(\gamma)$, such that $\alpha=\Phi(\gamma)$, i.e. there is a surjective $\Phi: \pi_1(X, x)\to \bar \pi_1(X)$; (ii) If $\tilde X$ is simply connected, then $\Phi$ is an isometry, i.e., $\bar\pi_1(X)=\pi_1(X, x)$. By Gromov, the fundamental group of a compact Riemannian manifold is finitely generated, for the revised fundamental group of a compact $\RCD^*(K, N)$-space, the same result holds. 
\begin{Lem}
Let $(X, d, m)\in \RCD^*(K, N)$, $K\in \Bbb R, N\geq 1$, with $\op{diam}(X)\leq D$. Then $\bar\pi_1(X)$ is finitely generated by loops of length $< 3D$. 
\end{Lem}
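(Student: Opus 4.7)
The proof is the classical Gromov ``short generators'' argument, transplanted into the RCD setting using Mondino--Wei's universal cover.

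Fix a basepoint $\tilde x\in \tilde X$ and write $x=\tilde\pi(\tilde x)$. For $\gamma\in\bar\pi_1(X)$ set $|\gamma|:=\tilde d(\tilde x,\gamma\tilde x)$. Since $(X,d)$ is proper (by the generalized Bishop--Gromov theorem, closed balls are compact) and $\tilde\pi:\tilde X\to X$ is a local-isometric covering onto a compact space, $\tilde X$ is proper and the deck action of $\bar\pi_1(X)$ is by isometries and properly discontinuous. Consequently, for each $R>0$ the set $\{\gamma\in\bar\pi_1(X):|\gamma|\leq R\}$ is finite; in particular the function $|\cdot|$ attains a positive minimum on $\bar\pi_1(X)\setminus\{e\}$.

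I define the generators inductively. Let $s_1$ be any element of $\bar\pi_1(X)\setminus\{e\}$ realizing $\min|\cdot|$. Having chosen $s_1,\dots,s_k$, set $H_k:=\langle s_1,\dots,s_k\rangle$; if $\bar\pi_1(X)\setminus H_k\ne\emptyset$, let $s_{k+1}$ be an element of minimal displacement in it. I claim $|s_{k+1}|\leq 2D$. Suppose not, and pick a unit-speed minimizing geodesic $\tilde\sigma:[0,|s_{k+1}|]\to\tilde X$ from $\tilde x$ to $s_{k+1}\tilde x$, with midpoint $\tilde z$. Project $\tilde z$ to $z\in X$; since $\op{diam}(X)\leq D$, there is a minimizing path in $X$ from $z$ to $x$ of length $\leq D$, which lifts through the covering $\tilde\pi$ (starting at $\tilde z$) to a path in $\tilde X$ ending at some point $\tilde w$ with $\tilde\pi(\tilde w)=x$; thus $\tilde w=g\tilde x$ for some $g\in\bar\pi_1(X)$, and
\[
|g|\leq \tilde d(\tilde x,\tilde z)+\tilde d(\tilde z,g\tilde x)\leq \tfrac{|s_{k+1}|}{2}+D,\qquad |g^{-1}s_{k+1}|\leq \tfrac{|s_{k+1}|}{2}+D,
\]
both strictly less than $|s_{k+1}|$ by the assumption $|s_{k+1}|>2D$. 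Minimality of $s_{k+1}$ in $\bar\pi_1(X)\setminus H_k$ forces $g\in H_k$ and $g^{-1}s_{k+1}\in H_k$, whence $s_{k+1}\in H_k$, a contradiction.

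Because every $s_i$ lies in the finite set $\{\gamma:|\gamma|\leq 2D\}$, the inductive procedure terminates; thus $\bar\pi_1(X)$ is generated by finitely many $s_i$ with $|s_i|\leq 2D$. Finally, for each $s_i$ the projection $\tilde\pi\circ\tilde\sigma_i$ of a minimizing geodesic from $\tilde x$ to $s_i\tilde x$ is a loop at $x$ in $X$ of length $|s_i|\leq 2D<3D$ whose lifted deck transformation is $s_i$, via the surjection $\Phi:\pi_1(X,x)\to\bar\pi_1(X)$ from the preliminaries. This yields a finite generating set of loops of length $<3D$.

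The only nontrivial step is justifying path lifting and properly discontinuous action in this generality: once one knows $\tilde\pi$ is a genuine covering by a local isometry onto a proper compact space (guaranteed by Mondino--Wei's construction), both properties follow from standard covering-space theory, and the rest of the argument is the classical Gromov short-generator trick verbatim.
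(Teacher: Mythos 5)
Your argument is correct, but it takes a genuinely different route from the paper. The paper leans on the Mondino--Wei identification of the universal cover with a $\delta$-cover $\tilde X^{\delta}$: it fixes a maximal $\delta/8$-separated net $A\subset X$ (finite by Bishop--Gromov), joins nearby net points by shortest geodesics $e_{i,j}$, and shows that any deck transformation, represented by the projection of a shortest path in $\tilde X$, can be pushed onto a chain of $e_{i,j}$'s modulo loops lying in $\delta$-balls, which are trivial in $\bar\pi_1$ precisely because $\tilde X=\tilde X^{\delta}$; this yields the explicit generating set $\gamma_{i,j}=h_i\ast e_{i,j}\ast h_j^{-1}$ of length $<2D+\delta$. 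You instead run the classical Gromov short-generator induction, which never touches the $\delta$-cover structure and only uses that $\tilde\pi$ is a covering local isometry onto a compact space with $\tilde X$ proper and geodesic; it gives the slightly cleaner displacement bound $2D$ and a generating set adapted to the group rather than to a net. The one step you pass over is that the lifted path's endpoint $\tilde w\in\tilde\pi^{-1}(x)$ must be of the form $g\tilde x$, i.e.\ the deck group must act transitively on fibers; this is exactly where the specific structure of the Mondino--Wei universal cover enters (it is a $\delta$-cover, hence a \emph{normal} covering, so transitivity holds), and it is the same fact that implicitly underlies the surjection $\Phi:\pi_1(X,x)\to\bar\pi_1(X)$ you invoke at the end. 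With that point made explicit, your proof is complete; the trade-off is that the paper's net argument produces a generating set of a priori bounded cardinality $c(N,K,\delta,D)^2$ (albeit with $\delta$ depending on $X$), whereas yours bounds the generators only by the cardinality of $\{\gamma:|\gamma|\le 2D\}$, which suffices for the lemma as stated.
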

\begin{proof}
By the proof of \cite[Theorem 2.7, Theorem 3.1]{MW}, we know that the universal cover $\tilde X$ is isometric to a $\delta$-cover $\tilde X^{\delta}$ of $X$ for some $\delta>0$, whose fundamental group $\pi_1(\tilde X^{\delta}, \tilde p)$ is isometric to the group generated by $\alpha^{-1}\beta\alpha$, where $\alpha$ is a path from $p$ to $\beta(0)$, $\beta$ is a loop in $B_{\delta}(q)$, for some $q\in X$.

Take a maximal subset $A$ of $X$ such that $p\in A$, $x_1=p$ and for each $x_i, x_j\in A, x_i\neq x_j$, $d(x_i, x_j)\geq \frac{\delta}{8}$. By Generalized Bishop-Gromov relative volume comparison Theorem 2.4, $|A|\leq c(N, K, \delta, D)$. Let $e_{i, j}$ be the shortest geodesic between $x_i, x_j$ when $d(x_i, x_j)\leq \frac{\delta}{2}$. For each 
$\alpha\in \bar\pi_1(X)$, let $\gamma\in \pi_1(X, p)$ be the projection of the shortest curve from $\tilde p$ to $\alpha(\tilde p)$ in $\tilde X$. Divide $\gamma$ into pieces of length $\leq \frac{\delta}{8}$ with cut points $p=\gamma(0)=\gamma(t_0), \gamma(t_1), \cdots, \gamma(t_m)$. For each $\gamma(t_i)$ there is $x_{k_i}$ such that $d(\gamma(t_i), x_{k_i})\leq \frac{\delta}{8}$, $x_{k_0}=x_1=p$. Take a shortest geodesic $c_i$ from $\gamma(t_i)$ to $x_{k_i}$. Then the loop $\left.\gamma\right|_{[t_i, t_{i+1}]}\ast c_{i+1}\ast e_{k_{i+1}, k_i}\ast c_i^{-1}\subset B_{\delta}(\gamma(t_i))$. Let $e=e_{1, k_1}\ast e_{k_2, k_3}\ast \cdots \ast e_{k_m, 1}$. Then
 $\gamma\ast e^{-1}\in \pi_1(\tilde X, \tilde p)$, i.e., $\Phi(e)=\alpha$. Let $h_j$ be a shortest geodesic from $p$ to $x_i$ and let $\gamma_{i, j}=h_i\ast e_{i, j}\ast h_j^{-1}$. Then $\bar\pi_1(X)$ can be generated by $\{\gamma_{i, j}\}$ with $|\gamma_{i,j}|< 2D+\delta$. Since $|A|$ is finite, $\bar\pi_1(X)$ is finitely generated.  
\end{proof}

\subsection{Gromov-Hausdorff convergence} In this subsection, we will state some definitions and properties about Gromov-Hausdorff convergence.

Consider a sequence of pointed geodesic spaces $(X_i, d_i, p_i)$ and $(X, d, p)$. We say $(X_i, d_i, p_i)$ is Gromov-Hausdorff convergent to $(X, d, p)$, denoted by $(X_i, d_i, p_i)\overset{GH}\to (X, d, p)$, if there are $\epsilon_i\to 0$ and maps $f_i: B_{\frac{1}{\epsilon_i}}(p_i)\to B_{\frac{1}{\epsilon_i}+\epsilon_i}(p)$, $f_i(p_i)=p$ that each is an $\epsilon_i$-Gromove-Hausdroff approximation (briefly, $\epsilon_i$-GHA),  i.e., satisfies the following two properties:

 (i) $\epsilon_i$-embedding: $|d_i(x_i, x'_i)-d(f_i(x_i), f_i(x'_i))|\leq \epsilon_i, \forall\, x_i, x'_i\in B_{\frac{1}{\epsilon_i}}(p_i)$;

(ii) $\epsilon_i$-onto: $B_{\frac{1}{\epsilon_i}}(p)\subset B_{\epsilon_i}(f_i(B_{\frac{1}{\epsilon_i}}(p_i)))$. 

Assume $X_i$ (resp. $X$) admits a closed isometric action $G_i$ (resp. $G$), we say that $(X_i, p_i, G_i)$ is equivariant Gromov-Hausdorff convergent to $(X, p, G)$ if there are $\epsilon_i\to 0$, $\epsilon_i$-GHAs $f_i$ and $\phi_i: G_i(\epsilon_i^{-1})\to G(\epsilon_i^{-1}+\epsilon_i)$, 
$\psi_i: G(\epsilon_i^{-1})\to G_i(\epsilon_i^{-1}+\epsilon_i)$, such that for $g_i\in G_i(\epsilon_i^{-1})$, $g\in G(\epsilon_i^{-1})$, $x_i\in B_{\frac{1}{\epsilon_i}}(p_i)$,
$$d(f_i(x_i), \phi_i(g_i)(f_i(g_i^{-1}(x_i)))\leq \epsilon_i,$$
$$d(g(f_i(x_i)), f_i(\psi_i(g)(x_i)))\leq \epsilon_i,$$
where $G_i(R)=\{g_i\in G_i, d_i(p_i, g_i(p_i))\leq R\}$.
\begin{Thm}[\cite{Fu}, \cite{FY}]
Assume $(X_i, p_i)\overset{GH}\to(X, p)$ and $X_i$ admits a closed isometric action $G_i$. Then there is a closed isometric group $G$ on $X$ such that by passing to a subsequence, $(X_i, p_i, G_i)$ is equivariant Gromov-Hausdorff convergent to $(X, p, G)$ and $(X_i/G_i, \bar p_i)\overset{GH}\to (X/G, \bar p)$, where $\bar p_i=\pi_i(p)$, $\pi_i: X_i\to X_i/G_i$ is the quotient map.
\end{Thm}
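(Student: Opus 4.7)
The plan is to construct $G$ as the limit of isometric actions via an Arzelà--Ascoli extraction, and then verify both the equivariant convergence and the quotient convergence. Throughout, I would use that the spaces $X_i$ are proper (they are geodesic spaces converging to $X$, and one only needs local compactness on balls of fixed radius, which follows from the GH convergence), so that isometry groups and their orbits are amenable to equicontinuity arguments.

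First, for each fixed $R>0$, consider $G_i(R)=\{g\in G_i:d_i(p_i,g(p_i))\leq R\}$. Given $g_i\in G_i(R)$, the conjugate $\tilde g_i:=f_i\circ g_i\circ f_i^{-1}$ is well-defined on $f_i(B_{\epsilon_i^{-1}-R}(p_i))$, is an $O(\epsilon_i)$-approximate isometry, and moves the basepoint $p$ by at most $R+\epsilon_i$. Fix a countable dense subset $\{q_k\}\subset X$ (which exists by separability). Since isometries of a proper metric space form an equicontinuous family on each bounded ball, I would use a Cantor diagonal argument: for any sequence $\{g_i\}$ with $g_i\in G_i(R)$, pass to a subsequence along which $\{\tilde g_i(q_k)\}$ converges for every $k$. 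The resulting pointwise limit extends by equicontinuity to an isometry $g$ of $X$. Define $G(R)$ as the set of all such limits (over all choices of sequences) and set $G:=\overline{\bigcup_R G(R)}$ in the topology of uniform convergence on compact subsets.

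Second, I would verify that $G$ is a closed subgroup of $\mathrm{Iso}(X)$: composition and inversion pass to the uniform-on-compacts limit, and the closure is taken explicitly. To produce $\phi_i$, for each $g_i\in G_i(\epsilon_i^{-1})$ I would choose $\phi_i(g_i)\in G$ approximately realizing $\tilde g_i$ on large balls; such an element exists by the very definition of $G$ combined with a further diagonalization ensuring a simultaneous choice over a fine net in $G_i(\epsilon_i^{-1})$. To produce $\psi_i$, I reverse the construction: for each $g\in G(\epsilon_i^{-1})$, write $g$ as a limit of conjugates $\tilde g_{i_k}$ of some $g_{i_k}\in G_{i_k}$, and for $i$ large pick $\psi_i(g)\in G_i$ approximating this limit. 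The compatibility inequalities $d(f_i(x_i),\phi_i(g_i)f_i(g_i^{-1}x_i))\leq\epsilon_i$ and the analogous one for $\psi_i$ are then immediate from the construction, after absorbing errors into the sequence $\epsilon_i$.

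Third, for the quotient convergence, I would define $\bar f_i:B_{\epsilon_i^{-1}}(\bar p_i)\to B_{\epsilon_i^{-1}+\epsilon_i}(\bar p)$ by $\bar f_i([x_i])=[f_i(x_i)]$. The $\epsilon$-embedding property reduces to comparing $\inf_{g_i\in G_i}d_i(x_i,g_iy_i)$ with $\inf_{g\in G}d(f_ix_i,gf_iy_i)$; the equivariant GHA condition gives
\[
d(f_i(g_i y_i),\phi_i(g_i)(f_i y_i))\leq \epsilon_i
\]
for any $g_i$, so the infimum on the $X_i$ side is at most the infimum on the $X$ side plus $O(\epsilon_i)$ (using $\phi_i$), and the reverse inequality follows using $\psi_i$. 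The $\epsilon$-onto property descends similarly from the $\epsilon$-onto property of $f_i$.

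The main obstacle will be the simultaneous, two-sided diagonal extraction ensuring that $G$ is exactly the limit, not merely an upper bound: I need both that every convergent sequence of $\tilde g_i$'s lies in $G$ (easy, by construction) and that every $g\in G$ can be approximated by conjugates of actual elements $g_i\in G_i$ (which is why $\psi_i$ exists). Since $\mathrm{Iso}(X)$ restricted to any compact ball is a compact metric space, the space of possible limits is second countable, so one can fix a countable dense sequence in each $G(R)$ and diagonalize. Taking a further diagonal over $R\to\infty$ yields the single subsequence along which the equivariant convergence holds on every bounded scale, completing the proof.
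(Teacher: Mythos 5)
This statement is quoted in the paper as a known result of Fukaya and Fukaya--Yamaguchi (the citations \cite{Fu}, \cite{FY}); the paper gives no proof of it, so there is nothing internal to compare against. Your sketch is, in substance, the standard precompactness argument from those references: conjugate $G_i$ through the Gromov--Hausdorff approximations, use equicontinuity of isometries of a proper space plus a Cantor diagonal to extract limits, and descend to the quotients via $\bar f_i([x_i])=[f_i(x_i)]$, comparing $\inf_{g_i\in G_i}d_i(x_i,g_iy_i)$ with $\inf_{g\in G}d(f_ix_i,gf_iy_i)$ through $\phi_i$ and $\psi_i$. That is the right proof and the steps you list are the right ones.

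One point deserves more care than your write-up gives it: you define $G(R)$ as the set of \emph{all} subsequential limits of conjugates before fixing a subsequence. As stated, that set need not be closed under composition, since $g$ and $h$ may arise as limits along disjoint subsequences and there is then no single sequence $u_i\in G_i$ whose conjugates converge to $gh$. The fix is the one you gesture at in your last paragraph, but it should come first: use the uniform bound on the $\epsilon$-capacity of $G_i(R)$ in the compact-open-type metric (coming from the bounded capacity of the orbit $G_i(R)p_i\subset B_R(p_i)$) to extract a single subsequence along which finite $\epsilon$-nets of every $G_i(R)$ converge, define $G$ as the closure of the limits along that fixed subsequence, and only then verify the group property, closedness, and the existence of $\phi_i,\psi_i$. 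Two further routine points worth recording: the limit of the conjugates is a priori only an isometric embedding of $X$ into itself, and surjectivity is obtained by passing to the limit of the inverses $g_i^{-1}$ as well; and $\bar f_i$ is only well defined up to $\epsilon_i$ on equivalence classes, which suffices for a GHA but should be said. With these adjustments the argument is complete and agrees with the cited sources.
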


We say metric measure spaces $(X_i, d_i, m_i, p_i)$ is measured Gromov-Hausdorff (briefly mGH) convergent to $(X, d, m, p)$, if for each $R>0$, there are measurable $\epsilon_i$-GHAs $f_i : (B_R( p_i), p_i)\to (B_{R+\epsilon_i}( p), p)$ and 
$$(f_i )_{\ast}\left(\left.m_i\right|_{B_R(p_i)}\right)\to \left.m\right|_{B_{R+\epsilon_i}(p_i)}$$ in the weak topology of measures.

\section{Equivariant Gromov-Hausdorff convergent structure in $\op{RCD}^*$-spaces}

In the section, we will generalize \cite[Theorem 3.10]{FY} (see also \cite[Theorem 4.2]{FY1}) to spaces which may not be simply connected but universal covers of compact $\RCD^*(K, N)$-spaces. This structure of equivariant Gromov-Hausdorff convergent sequences will be used in the proof of non-collapsing of $X$ in Theorem 1.1 (see section 4.2).
 
In the following of this section, we always assume that a sequence of metric measure spaces $(X_i, d_i, m_i)$ is measured Gromov-Hausdorff convergent to $(X, d, m)$ and $X_i\in \RCD^*(K, N)$ for all $i$.  By the compactness of $\RCD^*(K, N)$-spaces, we have that $X$ is a $\RCD^*(K, N)$ space (\cite{AGS}). Assume $(\tilde X_i, \tilde d_i, \tilde m_i)$ is the universal cover of $(X_i, d_i, m_i)$ (Theorem 2.9) and let $\Gamma_i=\bar\pi_1(X_i)$ be the revised fundamental group which acts isometrically on $\tilde X_i$. By Theorem 2.11, passing to a subsequence, we have the following communicate diagram: 
\begin{equation}\begin{array}[c]{ccc}
(\tilde X_i,\tilde p_i,\Gamma_i)&\xrightarrow{GH}&(\tilde X,\tilde p,G)\\
\downarrow\scriptstyle{\pi_i}&&\downarrow\scriptstyle{\pi}\\
(X_i,p_i)&\xrightarrow{GH} &(X, p)
\end{array} \end{equation}
By \cite{GR} or \cite{So}, $\Gamma_i, G$ are Lie groups. Let $G_0$ be the identity component of $G$. Comparing with \cite[Theorem 3.10]{FY} (\cite[Theorem 4.2]{FY1}), we have that
\begin{Thm}
Let $X_i, \tilde X_i, \Gamma_i, \tilde X, X, G, G_0$ be as in (3.1). Assume $X$ is compact.
Then there are normal subgroups $\Gamma_{i\epsilon}\subset \Gamma_i$ such that 

{\rm (3.1.1)} $\Gamma_{i\epsilon}$ is generated by $\Gamma_i(\epsilon)$ for some $\epsilon>0$;

{\rm (3.1.2)} $(\tilde X_i,  \tilde p_i, \Gamma_{i\epsilon})$ is equivariant Gromov-Hausdorff convergent to  $(\tilde X, \tilde p, G_0)$;

{\rm (3.1.3)} for $i$ large, $\Gamma_i/\Gamma_{i\epsilon}$ is isometric to $G/G_0$.
 \end{Thm}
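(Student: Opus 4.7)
The plan is to follow the strategy of Fukaya-Yamaguchi in \cite[Theorem 3.10]{FY} (see also \cite[Theorem 4.2]{FY1}), adapting it to the non-simply-connected setting by combining the Lie group structure of $G$, the finite generation of $\Gamma_i$ by bounded-length loops (Lemma 2.10), and a contradiction-plus-limit argument to replace the topological handle that simple-connectedness provided in the original. First, since $G$ is a Lie group and $G_0$ is open, the closed set $G\setminus G_0$ has positive displacement from $\tilde p$, so there exists $\epsilon_0>0$ with $G(\epsilon_0)\subset G_0$. Fix $\epsilon\in(0,\epsilon_0/4)$ and define $\Gamma_{i\epsilon}=\langle\Gamma_i(\epsilon)\rangle$, making (3.1.1) true by construction. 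It remains to prove (3.1.2), the normality of $\Gamma_{i\epsilon}$, and (3.1.3).

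For (3.1.2), I would argue in two directions. Forward: any $\alpha\in\Gamma_i(\epsilon)$ is carried by the equivariant map $\phi_i$ to an element of $G(\epsilon+o(1))\subset G_0$, so every finite product (hence every element of $\Gamma_{i\epsilon}$) approximates an element of $G_0$. Backward: since $G_0$ is a connected Lie group it is generated by any neighborhood of the identity, so any $g\in G_0$ factors as $g=h_1\cdots h_k$ with $h_j\in G(\epsilon/2)$; each $h_j$ lifts via $\psi_i$ to some $\eta_{i,j}\in\Gamma_i(\epsilon)$ for $i$ large, and the product $\eta_{i,1}\cdots\eta_{i,k}\in\Gamma_{i\epsilon}$ approximates $g$. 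Combined with the almost-equivariance of $\phi_i,\psi_i$, this yields the equivariant Gromov-Hausdorff convergence $(\tilde X_i,\tilde p_i,\Gamma_{i\epsilon})\to(\tilde X,\tilde p,G_0)$ claimed by (3.1.2).

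The main obstacle is showing that $\Gamma_{i\epsilon}$ is actually normal in $\Gamma_i$. By Lemma 2.10, each $\gamma\in\Gamma_i$ is a finite product of elements of length $<3D$, so induction on word length reduces the problem to the base case: \emph{for all large $i$, every $\mu\in\Gamma_i$ with $|\mu|\le 3D$ and every $\alpha\in\Gamma_i(\epsilon)$ satisfy $\mu\alpha\mu^{-1}\in\Gamma_{i\epsilon}$}. I would establish this by contradiction: if it failed along some sequence $i_k\to\infty$ with witnesses $\mu_{i_k}\in\Gamma_{i_k}(3D)$ and $\alpha_{i_k}\in\Gamma_{i_k}(\epsilon)$, pass to a subsequence so $\mu_{i_k}\to g\in G(3D)$ and $\alpha_{i_k}\to h\in G(\epsilon)\subset G_0$; then $\mu_{i_k}\alpha_{i_k}\mu_{i_k}^{-1}\to ghg^{-1}\in G_0$ by normality of $G_0$ in $G$. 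Using (3.1.2), already established in the previous paragraph, pick $\eta_{i_k}\in\Gamma_{i_k\epsilon}$ with $\eta_{i_k}\to ghg^{-1}$; then $\eta_{i_k}^{-1}\mu_{i_k}\alpha_{i_k}\mu_{i_k}^{-1}$ has displacement tending to $0$, so eventually lies in $\Gamma_{i_k}(\epsilon)\subset\Gamma_{i_k\epsilon}$, forcing $\mu_{i_k}\alpha_{i_k}\mu_{i_k}^{-1}\in\Gamma_{i_k\epsilon}$, a contradiction. This is the step I expect to be hardest: unlike in \cite{FY} there is no topological reason for conjugation-invariance to begin with, and one must substitute the normality of $G_0$ in $G$ together with the equivariant convergence established in Step~(3.1.2).

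For (3.1.3), having normality in hand, I would define $\Phi_i:\Gamma_i/\Gamma_{i\epsilon}\to G/G_0$ via the almost-equivariant lift $\phi_i$ applied to any bounded-length representative of each coset (such a representative exists by Lemma~2.10). Well-definedness and the homomorphism property follow from the almost-equivariance of $\phi_i$ together with the forward direction of (3.1.2); injectivity follows from the backward direction of (3.1.2), since $\phi_i(\gamma_i)\in G_0$ implies that $\gamma_i$ differs from some element of $\Gamma_{i\epsilon}$ by a short element and hence lies in $\Gamma_{i\epsilon}$; surjectivity follows from $\psi_i$ lifting elements of $G$ into $\Gamma_i$. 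Since both quotients are discrete and $\Phi_i$ respects displacements up to error $\to 0$, the group isomorphism becomes an isometry for $i$ large, yielding (3.1.3).
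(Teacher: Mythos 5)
Your architecture is far more direct than the paper's, and some pieces (the normality-by-contradiction argument via normality of $G_0$ in $G$, the construction of $\Phi_i$ for (3.1.3)) are fine as far as they go. But there is a genuine gap at the step you dispatch in one sentence: the ``forward direction'' of (3.1.2), i.e.\ that every element of $\Gamma_{i\epsilon}=\langle\Gamma_i(\epsilon)\rangle$ of bounded displacement limits into $G_0$. Knowing that each generator $\alpha\in\Gamma_i(\epsilon)$ has $\phi_i(\alpha)$ near $G_0$ does not control the limit of a word $\alpha_1\cdots\alpha_{k_i}$ when $k_i\to\infty$: the partial products form an $\epsilon$-chain in the orbit $\Gamma_i\tilde p_i$ that may leave every fixed ball $B_{R'}(\tilde p_i)$ before returning, and equivariant GH convergence says nothing about such excursions. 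A priori $\lim_i(\tilde X_i,\tilde p_i,\langle\Gamma_i(\epsilon)\rangle)$ is only some closed subgroup $G''\supset G_0$ with $G''/G_0$ a possibly nontrivial subgroup of $G/G_0$; ruling out $G''\neq G_0$ is the technical heart of the theorem, not a consequence of the definition. The gap propagates: without it $\Phi_i$ in (3.1.3) is not well defined (two representatives of one $\Gamma_{i\epsilon}$-coset could land in different components of $G$). A secondary problem is that your $\epsilon_0$ with $G(\epsilon_0)\subset G_0$ need not exist when the isotropy group of $\tilde p$ meets $G\setminus G_0$; the paper instead selects generators by their image, $\Gamma'_i(R)=\{\gamma\in\Gamma_i(R):\phi_i(\gamma)\in G_0\}$, and only recovers the displacement description (3.1.1) at the end.

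The paper closes this gap with the full Fukaya--Yamaguchi machinery, and that is exactly where the hypothesis you never use --- that $\tilde X_i$ is the universal cover of a compact $\RCD^*(K,N)$-space --- enters. One forms the pseudo-quotients $V_i(R)=B_R(\tilde p_i)/\Gamma'_i(3R)$, completes them to genuine coverings $V_i\to X_i$ by the pseudo-group construction, invokes the existence of universal covers of $\RCD^*$-spaces (Theorem 2.9) to identify $\Gamma'_{iR}=\bar\pi_{i*}(\bar\pi_1(V_i))$, and proves the key identity $\Gamma'_{iR}\cap\Gamma_i(R)=\Gamma'_i(R)$: an element of the generated subgroup with displacement at most $R$ corresponds to a loop of length $<R$ in $V_i$, hence lies in $V_i(R)$, hence is already one of the chosen generators. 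Feeding this identity into the limit argument of Step 3 is what forces the limit group to be exactly $G_0$ and produces the $\epsilon$ of (3.1.1). Since your argument never touches the covering-space structure of $\tilde X_i$, it would apply to arbitrary equivariant GH-convergent sequences, which is precisely the generality Fukaya--Yamaguchi's hypotheses are designed to exclude; you need either to import Steps 1--3 of the paper's proof or to supply a substitute for the identity $\Gamma'_{iR}\cap\Gamma_i(R)=\Gamma'_i(R)$.
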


In \cite[Theorem 3.10]{FY}, they considered an equivariant Gromov-Hausdorff convergent sequence of metric spaces $(\tilde X_i, \tilde p_i, \Gamma_i)\to (\tilde X, \tilde p, G), G_0\subset G$ and derived the same results as in Theorem 3.1 by assumming that (1) $G/G_0$ is finitely presented; (2) $G/G_0$ is discrete; (3) $\Gamma_i$ is properly discontinuous and free; (4) $\tilde X_i$ is simply connected; (5) $G_0$ is generated by $G_0(R_0)$, for some $R_0>0$; (6) $\pi_1(B_{R_0}(\tilde p), \tilde p)$ is surjective in $\pi_1(\tilde X, \tilde p)$; (7) $\tilde X/G$ is compact.  And in \cite[Theorem 4.2]{FY1}, they showed that without the conditions (1) and (6) the same results hold. Here, in Theorem 3.1, the conditions (2), (3), (5), (7) holds obviously. The weaker condition that $\tilde X_i$ is a universal cover of a compact $\RCD^*(K, N)$-space plays the same role as that $\tilde X_i$ is simply connected in \cite[Theorem 4.2]{FY1}. In our application below, $\tilde X$ is the $k$-dimensional simply connected hyperbolic space form.

 The proof of Theorem 3.1 is along the same line as \cite[Theorem 4.2]{FY1}. As we have discussed above, compared with \cite[Theorem 4.2]{FY1}, in Theorem 3.1 the only weaker condition is that $\tilde X_i$ is a universal cover of a $\RCD^*$-space $X_i$ and $\tilde X_i$ may not be simply connected. And correspondingly the results of Theorem 3.1 are about the deck transformation group of $\tilde X_i$, $\Gamma_i$. In the following, we will give a rough proof of Theorem 3.1 after \cite[Theorem 3.10]{FY} and \cite[Theorem 4.2]{FY1}.
 
 \begin{proof}[Proof of Theorem 3.1]
 
 Step 1: Group construction. 
 
 This step is the same as in \cite{FY} where we use the facts that $(\tilde X_i, \tilde p_i, \Gamma_i)$ is equivariant GH-convergent to $(\tilde X, \tilde p, G)$, $G/G_0$ is discrete and $\op{diam}(X)=D<\infty$.  
 
 By definition we have $\epsilon_i$-approximations $(f_i, \phi_i, \psi_i)$: 
$$f_i: B_{\epsilon_i^{-1}}(\tilde p_i)\to B_{\epsilon_i^{-1}+\epsilon_i}(\tilde p), \quad f_i(\tilde p_i)=\tilde p,$$ 
$$\phi_i: \Gamma_i(\epsilon_i^{-1})\to G(\epsilon_i^{-1}+\epsilon_i),$$ 
$$\psi_i: G(\epsilon_i^{-1})\to \Gamma_i(\epsilon_i^{-1}+\epsilon_i).$$
By the definition of pointed equivariant GH convergence, we only have the relations between $\Gamma_i(R)$ and $G(R)$ for large $R$. For large $i$, fixed  $10D<R<\epsilon_i^{-1}$. Consider 
$$\Gamma'_i(R)=\{\gamma\in \Gamma_i(R), \phi_i(\gamma)\in G_0\}.$$
Let 
$$\Lambda_i(R)=\Gamma_i(R)/\Gamma'_i(3R), \, H(R)=G(R)/G_0(3R),$$
$$V_i(R)=B_R(\tilde p_i)/\Gamma'_i(3R), \, W(R)=B_R(\tilde p)/G_0(3R),$$
where $\Gamma_i(R)/\Gamma'_i(3R) =\Gamma_i(R)/\sim$, $\gamma\sim \gamma'$ iff $\gamma^{-1}\gamma'\in \Gamma'_i(3R)$ and $B_R(\tilde p_i)/\Gamma'_i(3R)= B_R(\tilde p_i)/\sim$, $x\sim y$ iff there is $\gamma\in \Gamma'_i(3R)$ such that $\gamma x=y$.

By the construction $\Lambda_i(R), H(R)$ are quotient pseudo-groups. Now construct group $\hat H$ as $\hat H= F_{H(R)}/N_{H(R)}$, where $F_{H(R)}$ is the free group generated by $\{e_{\gamma}, \gamma\in H(R)\}$ and $N_{H(R)}$ is the normal group generated by $\{e_{\gamma_1\gamma_2}e_{\gamma_2}^{-1}e_{\gamma_1}^{-1}$, $\gamma_1, \gamma_2, \gamma_1\gamma_2\in H(R)\}$. And define $\hat \Lambda_i=F_{\Lambda_i(R)}/N_{\Lambda_i(R)}$ similarly.

For non-complete pseudo-covering spaces $V_i(R)$ of $X_i$ and $W(R)$ of $X$, we construct space $W$ as the connected component contains $(e, \bar p)$ of $\hat H\times W(R)/\sim$, where $(g\gamma, x)\sim (g, \gamma x)$ for $g\in \hat H$, $\gamma\in H(3R), x, \gamma x\in W(R)$ and construct $V_i$ as a covering of $X_i$ similarly.

The quotient group actions $\hat H$ on $\hat H\times W(R)/\sim$ and $\hat \Lambda_i$ on $\hat\Lambda_i\times V_i(R)/\sim$ are defined as $g((h, x))=(gh, x)$. Let $H, \Lambda_i$ be the subgroups of $\hat H$ and $\hat \Lambda_i$ that preserving $W, V_i$ respectively.

Step 2: Find $\Gamma'_{iR}$ such that $\Gamma'_{iR}\cap \Gamma_i(R)=\Gamma'_i(R)$ and $\Gamma_i/\Gamma'_{iR}=\Lambda_i$.

This step is similar as in \cite{FY}.

By the constructions and the facts that $\Gamma_i$ acts on $\tilde X_i$ properly discontinuous and freely, for $(V_i, \Lambda_i)$ and $(W, H)$ we could show that: (i) The actions $\Lambda_i$ on $V_i$  and $H$ on $W$ are properly discontinuous; (ii) $\Lambda_i$ is a free action; (iii) $\Lambda_i$ is isometric to $H$; (iv) $V_i/\Lambda_i=X_i,  W/H=X$ (see \cite[Lemma A1.12 - A1.14]{FY}).

By (i) and (ii), we know that $V_i$ is a cover space of $X_i$ and thus $V_i$ is a $\RCD^*(K, N)$-space. By Theorem 2.9, $V_i$ has a universal cover space and it is $\tilde X_i$. 

Let $\bar \pi_i: V_i\to X_i$ be the covering map and let 
$$\Gamma'_{iR}=\bar\pi_{i *}(\bar\pi_1(V_i)).$$
Then by using $V_i=\tilde X_i/ \bar \pi_1(V_i), X_i=\tilde X_i/\Gamma_i$ and (iv), we have that $V_i$ is a normal cover and \begin{equation}\Gamma_i/\Gamma'_{iR}=\Lambda_i. \label{quo-group}\end{equation}
Note that in \cite{FY},  \eqref{quo-group} is derived by using the short exact sequence of the fundamental group and the simply connectedness of $\tilde X_i$.

To see $\Gamma'_{iR}\cap \Gamma_i(R)=\Gamma'_i(R)$, as in \cite[Lemma A1.15]{FY}, first note that for each $\gamma\in \Gamma'_i(R)$, $d(\gamma(\tilde p_i), \tilde p_i)<R$. By the discussion in front of Lemma 2.10, there is a corresponding loop $l_{\gamma}\subset V_i(R)\subset V_i$. Thus $\Gamma'_{iR}\supset  \Gamma'_i(R)$. For $\gamma\in \Gamma'_{iR}\cap \Gamma_i(R)$, there is loop $l_{\gamma}$ in $V_i$ such that 
$\bar\pi_{i*}(l_{\gamma})=\gamma$, and the length of $l_{\gamma}<R$.
Thus $l_{\gamma}\subset V_i(R)=B_R(\tilde p_i)/\Gamma'_i(3R)$ and $\gamma\in \Gamma'_i(R)$ which implies $\Gamma'_{iR}\cap \Gamma_i(R)\subset \Gamma'_i(R)$.

Step 3: Let $\Gamma''_{iR}$ be the group generated by $\Gamma'_i(R)$. Then $\Gamma''_{iR}=\Gamma'_{iR}$ and $\Gamma''_{iR}$ is generated by $\Gamma_i(\epsilon)$ for some $\epsilon>0$.

This step is similar as in \cite{FY1}. 

First, by Lemma 2.10, we have that for large $i$, $\Gamma_i$ is generated by $\Gamma_i(3D)$. And then $G$ is generated by $G(6D)$.

In fact, for each $\bar P(\gamma)\in \left(G/G_0\right)(R)$, where $\bar P: G\to G/G_0$ is the quotient map, for large $i$, there is $\gamma_{i1}, \cdots, \gamma_{iN}\in \Gamma_i(3D)$ such that
$$\psi_i(\gamma)=\gamma_{i1}\cdots\gamma_{iN},$$
where $N$ depends on $R$ and $D$.
Since for each $j=1,\cdots, N$,
$$d(\phi_{i}(\gamma_{ij})f_i(\tilde p_i), f_i(\gamma_{ij}(\tilde p_i)))\leq \epsilon_i,  \quad d(\gamma_{ij}(\tilde p_i), \tilde p_i)\leq 3D,$$
we have that
$$\phi_i(\gamma_{ij})\in G(3D+2\epsilon_i).$$
And 
$$d(\gamma(\tilde p), \phi_i(\gamma_{i1})\cdots\phi_{i}(\gamma_{iN})(\tilde p))<CN\epsilon_i.$$

Since $G/G_0$ is discrete, and without loss of generality, we may assume $\bar P(\gamma^{-1}\phi_i(\gamma_{i1})\cdots\phi_{i}(\gamma_{iN}))$ is not in the isotropy group at $\bar p\in \tilde X/G_0$, then for $i$ large,  $\bar \pi(\gamma)=\bar\pi(\phi_i(\gamma_{i1})\cdots\phi_{i}(\gamma_{iN}))$. 

Thus $\gamma=\phi_i(\gamma_{i1})\cdots\phi_{i}(\gamma_{iN})\gamma'$ for some $\gamma'\in G_0$, i.e., $G$ can be generated by $G(6D)$.

Second, we have that for large $R$ and $i$,
\begin{equation}\lim_{i\to\infty}(\tilde X_i, \tilde p_i, \Gamma''_{iR})\to (\tilde X, \tilde p, G_0) \label{con-lim}\end{equation}
and $\Gamma''_{iR}$ is generated by $\Gamma_i(\epsilon)$ for some $\epsilon>0$ which implies $\Gamma''_{iR'}=\Gamma''_{iR}$ for each $R'\geq R$.

The above facts are same as \cite[Lemma 5.8, Lemma 5.9]{FY1}. In fact, if 
$$\lim_{i\to\infty}(\tilde X_i, \tilde p_i, \Gamma''_{iR})\to (\tilde X, \tilde p, G''),$$
then $G''\supset G_0(R)$ and thus $G''\supset G_0$. Assume $\gamma\in G''\setminus G_0$, $\gamma\in G(R'), R'>R$ and 
$$\lim_{i\to\infty}(\tilde X_i, \tilde p_i, \Gamma''_{iR'})\to (\tilde X, \tilde p, G''').$$
Since $\Gamma'_{iR'}\supset \Gamma''_{iR'}$ and 
$$\Gamma'_{iR'}\cap \Gamma_i(R')=\Gamma'_i(R'),$$
we have
$\left<\Gamma'_i(R')\right>(R')\subset \Gamma_i(R')$. Note that  $\Gamma''_{iR'}(R')\to G'''(R')$, $\Gamma'_i(R')\to G_0(R')$. Thus $G'''(R')=G_0(R')$ and thus $\gamma\notin G'''$ which is a contradiction to $\Gamma'_i(R')\supset \Gamma'_i(R)$.

By \eqref{con-lim} and the fact that $G_0$ is generated by $G_0(\frac{\epsilon}{2})$, a similar argument as in the First part of Step 3 gives that $\Gamma''_{iR}$ is generated by $\Gamma_i(\epsilon)$ for some $\epsilon>0$ (see the proof of \cite[Lemma 5.9]{FY1} for the details).

Last, for $R$ and $i$ large, $\Gamma''_{iR}$ is normal and $\Gamma''_{iR}=\Gamma'_{iR}$.

To see this fact, just note that: (1) by the construction and the fact that $\Gamma_i$ is generated by $\Gamma_i(3D)$, $\Gamma'_{iR}\supset \Gamma''_{iR}$ and thus $\Gamma_i/\Gamma'_{iR}\subset \Gamma_i/\Gamma''_{iR}$; (2)
$\Lambda_i(R)$ generates $\Gamma_i/\Gamma''_{iR}$ and $\left<\Lambda_i(R)\right>\subset \Lambda_i\cong \Gamma_i/\Gamma'_{iR}$ (see the proof of \cite[Lemma 5.11, Lemma 5.12]{FY1} for more details).

Step 4: $H$ is isomorphic to $G/G_0$.

This step is the same as in \cite{FY1}.

By \cite[Lemma A1.10]{FY}, there is a homomorphism $P_R: H\to G/G_0$. And since $G$ is generated by $G(6D)$, $P_R$ is surjective. To see $P_R$ is injective, we use the fact that for $i$ large,
$$\Lambda_i\cong \Gamma_i/\Gamma'_{iR}\cong H,$$
and 
$$\Gamma_i\to G,\quad \Gamma'_{iR}=\Gamma''_{iR}=\Gamma''_{iR'}\to G_0, \text{ for } R'\geq R.$$
Let $\gamma$ be in the kernel of $P_R$. Then take $\gamma$ as an element of $\Gamma_i/\Gamma'_{iR}$ especially an element of $\Gamma_i(R')/\Gamma'(3R')$.
By \cite[Lemma A.1.9]{FY}, there is a bijection between $\Gamma_i(R')/\Gamma'_i(3R')$ and  $G(R')/G_0(3R')$, we can also treat $\gamma$ as an element of $G(R')/G_0(3R')\subset H$. Then the injection of $G(R')/G_0(3R')\to G/G_0$ implies $\gamma$ is identity (see the proof of \cite[Lemma 5.13]{FY1} for more details).
\end{proof}

\section{Proof of Theorem 1.1 and 1.2}

To prove Theorem 1.1 and Theorem 1.2, as the proof of Theorem D in \cite{CRX}, consider a sequence of $\RCD^*(-(N-1), N)$-spaces, $(X_i, d_i, m_i)$ which is measured Gromov-Hausdorff convergent to $(X, d, m)$ and satisfies that 
\begin{equation}\op{diam}(X_i)\leq D, \quad h(X_i)\geq N-1-\epsilon_i\to N-1\end{equation}
and the following commutative diagram
\begin{equation}\begin{array}[c]{ccc}
(\tilde X_i,\tilde p_i,\Gamma_i)&\xrightarrow{GH}&(\tilde X,\tilde p,G)\\
\downarrow\scriptstyle{\pi_i}&&\downarrow\scriptstyle{\pi}\\
(X_i,p_i)&\xrightarrow{GH} &(X, p)
\end{array} \end{equation}
where $\Gamma_i$ is the revised fundamental group of $(X_i, d_i)$. Then we only need to show that $(X, d, m)$ is isometric to a hyperbolic manifold if one of the following conditions holds:

(a) $\{X_i\}$ are Alexandrov spaces with curvature $\geq -1$;

(b) $\op{Haus}^N(X_i)>v>0$ for all $i$.

We will prove Theorem 1.1 and Theorem 1.2 in the following two steps: 1, $\tilde X$ is isometric to $\Bbb H^k$, for some $k\leq N$ (subsection 4.1); 2, under the condition (a) or (b), the isometric action $G$ is discrete and free (subsection 4.2).

We denote $\Psi(\epsilon_1, \cdots, \epsilon_i | \delta_1, \cdots, \delta_j)$ as a function that goes to $0$ as $\epsilon_1, \cdots, \epsilon_i$ go to $0$ and $\delta_1, \cdots, \delta_j$ are fixed. And let $-\kern-1em\int_{B_r(x)} f=\frac{1}{m(B_r(x))}\int_{B_r(x)}f$. 

\subsection{Almost warped product structure} 
The main result of this subsection is 
\begin{Thm} Let $\tilde X$ be as in (4.1) and (4.2). We have that $\tilde X$ is isometric to $\Bbb H^k$, $k\leq N$.
\end{Thm}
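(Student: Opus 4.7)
The plan is to implement, in the $\op{RCD}^*$ setting, the strategy of \cite{CRX}: deduce from the near-maximal entropy hypothesis a functional almost-equality in the Laplacian comparison applied to a Busemann-type function on $\tilde X_i$, then invoke the warped-product rigidity encoded in Proposition 4.4 to pass to an exact warped structure in the limit.

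First I would convert the entropy hypothesis into a volume-growth statement on $\tilde X_i$. Since $\op{diam}(X_i)\leq D$, the orbit $\Gamma_i\tilde p_i$ is $D$-dense in $\tilde X_i$, so
$$\#\{\gamma\in\Gamma_i:\tilde d_i(\tilde p_i,\gamma\tilde p_i)\leq R\}\cdot m_i(X_i)\leq \tilde m_i(B_{R+D}(\tilde p_i))\leq c(N,D)\#\{\gamma\in\Gamma_i:\tilde d_i(\tilde p_i,\gamma\tilde p_i)\leq R+2D\}\cdot m_i(X_i).$$
The assumption $h(X_i)\geq N-1-\epsilon_i$ therefore forces $\tilde m_i(B_R(\tilde p_i))\geq c\,e^{(N-1-\epsilon_i)R}$ for $R$ large, while the Bishop--Gromov upper bound (Theorem 2.4 with $H=-1$) gives $\tilde m_i(B_R(\tilde p_i))\leq c_N\int_0^R\sinh^{N-1}(t)\,dt$. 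In particular, the annular volume ratios of $\tilde X_i$ to the hyperbolic model are near-extremal on arbitrarily large scales.

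Next I would select $\gamma_{i,n}\in\Gamma_i$ with $\tilde d_i(\gamma_{i,n}\tilde p_i,\tilde p_i)\to\infty$ and form the Busemann-type function
$$b_i(x)=\lim_{n\to\infty}\bigl(\tilde d_i(x,\gamma_{i,n}\tilde p_i)-\tilde d_i(\tilde p_i,\gamma_{i,n}\tilde p_i)\bigr).$$
By Theorem 2.6,
$$\Delta\,\tilde d_i(\cdot,\gamma_{i,n}\tilde p_i)\leq (N-1)\coth\bigl(\tilde d_i(\cdot,\gamma_{i,n}\tilde p_i)\bigr)\,\tilde m_i,$$
and since $\coth(r)\to 1$ as $r\to\infty$, we obtain $\Delta b_i\leq (N-1)\,\tilde m_i$ on every bounded subset. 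Testing this inequality against a Lipschitz cutoff supported in $B_R(\tilde p_i)$ and integrating by parts with the calculus of Section 2.1, the near-extremal volume growth forces the deficit
$$\int\bigl((N-1)\tilde m_i-\Delta b_i\bigr)\varphi=\Psi(\epsilon_i\mid N,D,R)$$
to be small. Passing to the mGH limit yields the exact Laplacian identity $\Delta b=(N-1)\,\tilde m$ on $\tilde X$, with $|\nabla b|_w=1$ $\tilde m$-a.e.

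Finally I would invoke Proposition 4.4: on the $\op{RCD}^*(-(N-1),N)$-space $\tilde X$, a function $b$ with $|\nabla b|_w=1$ and $\Delta b=(N-1)\,\tilde m$ generates a warped-product splitting
$$\tilde X\cong\bigl(\mathbb{R}\times Y,\ dt^2+e^{2t}\,g_Y\bigr),\qquad (Y,d_Y,m_Y)\in\op{RCD}^*(0,N-1).$$
The limit group $G$ from (4.2) preserves this horospherical structure, so $Y$ inherits an isometric action and the same near-maximal entropy condition at every scale; iterating on $Y$ (or combining the warped splitting with Theorem 2.7 to peel off $\mathbb{R}$-factors of $Y$) identifies $\tilde X$ with $\mathbb{H}^k$ for some $k\leq N$. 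The principal obstacle is the integration-by-parts step that turns a purely entropic volume lower bound into a clean bounded-domain deficit for $\Delta b$: this is precisely the quantitative upgrade of \cite[Theorem 1.2]{CDNPSW} that Proposition 4.4 is designed to supply, and it depends crucially on the RCD Laplacian calculus of Section 2.1 together with the volume comparison of Theorem 2.4.
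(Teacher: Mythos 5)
Your overall strategy coincides with the paper's: convert the almost-maximal entropy into an almost-equality in the Laplacian comparison for normalized distance functions from far-away orbit points, pass to a limit function with $\Delta f=N-1$ and $|\nabla f|_w=1$, and invoke the warped-product rigidity of Theorem 4.2 through Proposition 4.4, finishing with the argument of Section 8 of \cite{CDNPSW}. However, the two steps you describe as ``the integration-by-parts step'' are precisely where the content of Propositions 4.3 and 4.4 lies, and as written they do not go through. The entropy hypothesis, combined with Bishop--Gromov and the identity $\int_{A_{t_1,t_2}(\tilde p)}\Delta r=s_{\tilde m}(\tilde p,t_2)-s_{\tilde m}(\tilde p,t_1)$, only controls the \emph{average} of $\Delta r$ over large annuli $A_{r_n-5R,\,r_n+5R}(\tilde p)$ (Lemmas 4.6--4.7). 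It does not give a small Laplacian deficit on a prescribed ball $B_R(\tilde p_i)$: the basepoint may lie in a bad region of the annulus. The paper bridges this with a selection argument -- a maximal $R$-separated net $E$ in the annulus, the measure estimates (4.7)--(4.9) showing that most of the $R$-balls are good, and a deck transformation pulling one good ball back into $B_D(\tilde p)$ (which is why the far reference point is $\tilde p_n=\gamma_n(\tilde p)$, depending on $n$ and $R$, rather than a fixed Busemann sequence). Your proposal asserts the ball-level deficit directly and skips this entirely.

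The second, more serious omission is the analogue of (4.3.2): that for (most) $y$ in the good ball there exists $q_n(y)$ with $d(y,q_n(y))+d(y,\tilde p_n)\leq d(\tilde p_n,q_n(y))+\Psi$, i.e., minimal geodesics from $\tilde p_n$ almost extend a definite distance past $y$. In the proof of Proposition 4.4 this is what produces, for each $a\in(0,R)$, a point $y_a$ with $d(y_a,y)=a$ and $f_R(y_a)-f_R(y)=a$, which yields both $|Df_R|=1$ (Claim 3) and the $c$-concavity $(af_R)^c=-af_R-\tfrac{a^2}{2}$ (Claim 2); the $c$-concavity is what replaces the regular Lagrangian flow of \cite{CDNPSW} in constructing the translation map $\Bbb R\times\tilde X\to\tilde X$. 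Writing down a Busemann-type limit $b_i$ and asserting $|\nabla b|_w=1$ for the mGH limit $b$ is not a substitute: the limit over $n$ need not exist without extraction, the slope of a limit of $1$-Lipschitz functions can drop under mGH convergence, and $\Delta b=N-1$ alone does not force $|\nabla b|_w=1$. Without the quantitative two-sided extendability you cannot verify the hypotheses of Theorem 4.2 on $\tilde X$. (A minor point: in Theorem 4.2 the fiber $Y$ is an $\RCD^*(0,N)$-space, not $\RCD^*(0,N-1)$.)
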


To prove Theorem 4.1, we will show that $\tilde X$  is isometric to a warped product space $\Bbb R\times_{e^r} Y$. Then that $\tilde X=\Bbb H^k$ will be derived by the same argument of Section 8 in \cite{CDNPSW} (see also \cite[Lemma 4.4]{CRX}). 

First recall that
\begin{Thm}\cite[Theorem 1.2]{CDNPSW}
Given $N>1$, a complete $\RCD^*(-(N-1), N)$-space $(X, d, m)$ is isometric to a warped product space $\Bbb R\times_{e^r} Y$, where $Y$ is a $\RCD^*(0, N)$-space, if there is a function $u\in D(\Delta, X\setminus \{pt.\})$ satisfying that $|\nabla u|_w=1$ m-a.e. and $\Delta u=N-1$.
\end{Thm}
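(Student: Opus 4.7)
The plan is to treat $u$ as a ``hyperbolic height function'' whose level sets will turn out to be the horospheres of the asserted warped product $\Bbb R\times_{e^r}Y$. This is the $\RCD^*(-(N-1),N)$ analog of how a Busemann function with $|\nabla b|_w=1$, $\Delta b=0$ produces the Cheeger-Gromoll splitting in $\RCD^*(0,N)$. The technical engine is the Gigli-Savar\'e second-order calculus, and in particular the improved (Hessian form) Bochner inequality
\[
\tfrac12\Delta|\nabla f|_w^2 \geq |\Hess f|_{HS}^2 + \Gamma(\nabla f,\nabla\Delta f) + K|\nabla f|_w^2,
\]
valid on $\RCD^*(K,N)$.

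First I would apply this inequality to $f=u$. Since $|\nabla u|_w^2\equiv 1$ the left-hand side vanishes and since $\Delta u=N-1$ is constant the inner-product term also vanishes; with $K=-(N-1)$ this yields the pointwise upper bound $|\Hess u|_{HS}^2\leq N-1$ $m$-a.e. Next, differentiating $|\nabla u|_w^2\equiv 1$ gives $\Hess u(\nabla u,\cdot)=0$, so $\nabla u$ is a null direction and the restriction of $\Hess u$ to the orthogonal complement $(\nabla u)^\perp$ has trace $\Delta u=N-1$. Since the essential dimension $n$ of the $\RCD^*(-(N-1),N)$-space is at most $N$, this orthogonal complement has dimension at most $N-1$, and pointwise Cauchy-Schwarz on symmetric bilinear forms gives
\[
|\Hess u|_{HS}^2 \;\geq\; \frac{(\op{tr}\Hess u|_{(\nabla u)^\perp})^2}{n-1} \;=\; \frac{(N-1)^2}{n-1} \;\geq\; N-1,
\]
with equality throughout forcing $n=N$ and $\Hess u|_{(\nabla u)^\perp}=\op{Id}$. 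Combined with the upper bound, this pins down $\Hess u = g - du\otimes du$ pointwise $m$-a.e., interpreted in Gigli's bilinear-form sense on the $L^2$-module of symmetric $2$-tensors.

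With the Hessian rigidly identified, I would integrate along the gradient flow $\phi_t$ of $u$. The condition $|\nabla u|_w=1$ makes every integral curve of $\nabla u$ a unit-speed minimizing ray, so the flow is globally defined and each $\phi_t$ is a bi-Lipschitz bijection between the level sets $\{u=c\}$ and $\{u=c+t\}$. The equation $\Hess u = g - du\otimes du$ translates, via the first variation of the pullback metric along $\phi_t$, into the tensorial ODE
\[
\tfrac{d}{dt}\bigl(\phi_t^* g\bigr)\big|_{(\nabla u)^\perp} \;=\; 2\,\phi_t^* g\bigl|_{(\nabla u)^\perp},
\]
whose unique solution is $\phi_t^* g|_{(\nabla u)^\perp}=e^{2t}\,g|_{(\nabla u)^\perp}$. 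Setting $Y:=\{u=0\}$ with its induced length metric, the assignment $(t,y)\mapsto\phi_t(y)$ then defines an isometry $\Bbb R\times_{e^r} Y \to X$, and a parallel computation for the reference measure using $\Delta u=N-1$ identifies the push-forward with the natural warped-product measure. Finally, the $\RCD^*$-tensorization and stability machinery runs in reverse: an $\RCD^*(-(N-1),N)$ total space of warping form $e^r$ forces $Y$ to be $\RCD^*(0,N)$.

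The main obstacle is carrying out the Hessian rigidity rigorously in the non-smooth $\RCD^*$ setting. Pointwise eigenvalue arguments must be replaced by bilinear-form identities inside Gigli's second-order module calculus; the essential-dimension inequality $n\leq N$ must be inserted through the refined dimensional Bochner correction; and the Cauchy-Schwarz equality case must be promoted from an $m$-a.e.\ statement to a structural identity strong enough to make the gradient flow globally defined, not merely measure-theoretically well posed. The extraction of $n=N$ from saturation is crucial: without it, the orthogonal complement could be of dimension less than $N-1$ and the trace constraint would be vacuous. Once these analytic issues are resolved, the identification of $X$ with $\Bbb R\times_{e^r}Y$ and the $\RCD^*(0,N)$-regularity of the base follow from the same measure-theoretic tools that underlie the $\RCD^*$ splitting theorem of \cite{Gi}.
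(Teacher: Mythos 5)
This statement is quoted verbatim from \cite[Theorem 1.2]{CDNPSW}; the paper does not reprove it, and the only indication of its proof is the author's remark after Proposition 4.4 that \cite{CDNPSW} obtain the map $\Bbb R\times X\to X$ from the regular Lagrangian flow of $\nabla u$ via \cite{AT}, with the alternative of using $c$-concavity of $au$ and \cite{GRS} as in Gigli's splitting theorem. Measured against that, your sketch follows the same overall strategy as the cited proof (Bochner with Hessian term $\Rightarrow$ $|\Hess u|_{HS}^2\leq N-1$, trace saturation $\Rightarrow$ $\Hess u=g-du\otimes du$, then flow along $\nabla u$ to exhibit the warping $e^{2t}$), so the skeleton is right.

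The gap is that the two steps you defer to the ``main obstacle'' paragraph are precisely the content of the theorem, not technical afterthoughts. First, your trace argument uses $\op{tr}\bigl(\Hess u|_{(\nabla u)^\perp}\bigr)=\Delta u$; in an $\RCD^*$ space the identity $\op{tr}\Hess u=\Delta u$ (and the constancy of the essential dimension $n$, needed for your Cauchy--Schwarz bound $(N-1)^2/(n-1)$) are nontrivial theorems, and without them the chain $|\Hess u|_{HS}^2\geq (N-1)^2/(n-1)\geq N-1$ does not get off the ground. Second, and more seriously, ``integrate along the gradient flow $\phi_t$ of $u$'' is not available pointwise: $\nabla u$ is only an element of an $L^2$-module, its flow exists a priori only as a regular Lagrangian flow defined up to $m$-null sets, and the statement that integral curves are unit-speed minimizing rays and that $\phi_t$ restricts to a bijection of level sets is exactly what must be proved. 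The two known ways to close this are the ones the paper points to: the \cite{AT} regular Lagrangian flow upgraded to a measure-preserving family of maps using the Hessian identity, or the optimal-transport route in which one first shows $au$ is $c$-concave with $(au)^c=-au-a^2/2$ for all $a$ and then applies \cite{GRS} to exponentiate. Note that the second route is the one the paper itself actually carries out in its quantitative analogue (Claim 2 in the proof of Proposition 4.4), whereas your sketch never establishes any $c$-concavity or any global distance-function behaviour of $u$; as written, the passage from the $m$-a.e.\ tensor identity $\Hess u=g-du\otimes du$ to an honest isometry $\Bbb R\times_{e^r}Y\to X$ is asserted rather than proved.
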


In the following, we will show a quantitative version of Theorem 4.2 (see Proposition 4.4). And Theorem 4.1 will be derived by proposition 4.3 and 4.4.



\begin{Prop}
Let $(X, d, m)$ be a $\RCD^*(-(N-1), N)$-space, $\tilde p\in \tilde X$.  If 
$$\op{diam}(X)\leq D, \quad h(X)\geq N-1-\epsilon,$$
then for $R>2D$, there are $r_n\to \infty$,  $x_n\in B_D(\tilde p)$, such that 

{\rm (4.3.1)} there is $\tilde p_n$ ($=\gamma_n(\tilde p)$, for some $\gamma_n\in \Gamma=\bar\pi_1(X)$ here) such that $r_n-2R<d(\tilde p_n, x_n)<r_n+2R$ and $f_{n, R}(y)=d(y, \tilde p_n)-d(x_n, \tilde p_n)$ satisfies
$$-\kern-1em\int_{B_R(x_n)}\Delta f_{n, R}\geq N-1-\Psi(\epsilon, r_n^{-1} | N, D);$$

{\rm (4.3.2)} for each $y\in B_R(x_n)$ there exists $q_n(y)\in \tilde X$ such that $d(q_n(y), \tilde p_n)=r_n+5R$ and 
$$d(y, q_n(y))+d(y, \tilde p_n)\leq r_n+5R+\Psi(\epsilon, r_n^{-1} | N, D).$$
\end{Prop}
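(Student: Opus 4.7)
The plan is to select scales $r_n\to\infty$ and orbit points $\tilde p_n=\gamma_n\tilde p$ from the volume entropy hypothesis via a pigeonhole on the Bishop--Gromov ratio, establish an averaged Laplacian estimate on an annulus around $\tilde p$, and then transfer it via $\Gamma$-averaging to the ball $B_R(x_n)$; the condition (4.3.2) will follow from geodesic extension in $\tilde X$.

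I work on the universal cover $(\tilde X,\tilde d,\tilde m)$, which is $\RCD^*(-(N-1),N)$ by Theorem 2.9. Set $V(r)=\tilde m(B_r(\tilde p))$ and $\Phi(r)=\int_0^r\operatorname{sn}_H^{N-1}(t)\,dt$ with $H=-1$. Theorem 2.4 gives that $V/\Phi$ and $s_{\tilde m}(\tilde p,\cdot)/\operatorname{sn}_H^{N-1}$ are non-increasing in $r$. From $h(X)\ge N-1-\epsilon$, extract $r_k\to\infty$ with $\log V(r_k)\ge(N-1-\epsilon)r_k-o(r_k)$, so the non-decreasing function $g(r):=\log\Phi(r)-\log V(r)$ satisfies $g(r_k)\le\epsilon r_k+o(r_k)$. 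A pigeonhole on windows $[jL,(j+1)L]\subset[1,r_k]$ with $L=6R$ produces $s_n\to\infty$ such that $g(s_n+L)-g(s_n)\le\Psi(\epsilon|N,D,R)$, which by monotonicity forces both $V/\Phi$ and $s_{\tilde m}/\operatorname{sn}_H^{N-1}$ to be $\Psi$-close to constant on $[s_n,s_n+L]$. Set $r_n:=s_n+L/2$.

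Apply Proposition 2.2 with radial Lipschitz cutoffs of $A_n:=B_{s_n+L}(\tilde p)\setminus B_{s_n}(\tilde p)$, together with $|\nabla d(\cdot,\tilde p)|_w\equiv 1$, to obtain the weak identity $\int_{A_n}\Delta d(\cdot,\tilde p)\,d\tilde m=s_{\tilde m}(\tilde p,s_n+L)-s_{\tilde m}(\tilde p,s_n)$. The near-constancy of $s_{\tilde m}/\operatorname{sn}_H^{N-1}$ gives $\int_{A_n}[(N-1)-\Delta d(\cdot,\tilde p)]\,d\tilde m\le\Psi\tilde m(A_n)$, with a non-negative integrand thanks to the pointwise upper bound $\Delta d(\cdot,\tilde p)\le(N-1)\coth(d(\cdot,\tilde p))\le N-1+O(e^{-r_n})$ on $A_n$ from Theorem 2.7. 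To localize to a fixed-size ball, $\Gamma$-average: by the $D$-density of $\Gamma\tilde p$, the balls $\{B_R(\gamma^{-1}\tilde p)\}$ with $\gamma^{-1}\tilde p$ ranging over orbit points in a slight enlargement of $A_n$ cover $A_n$ with multiplicity bounded by $K(N,R,D)$. Since $\tilde m$ and the Laplacian are equivariant,
\[
\sum_{\gamma}\int_{B_R(\tilde p)}\bigl[(N-1)-\Delta d(\cdot,\gamma\tilde p)\bigr]\,d\tilde m=\sum_{\gamma}\int_{B_R(\gamma^{-1}\tilde p)}\bigl[(N-1)-\Delta d(\cdot,\tilde p)\bigr]\,d\tilde m\le K\Psi\tilde m(A_n).
\]
As $\tilde m(A_n)/\#\{\gamma\}$ is comparable to the fundamental-domain mass $\tilde m(F)\le\tilde m(B_D(\tilde p))$ and $\tilde m(B_R(\tilde p))$ is bounded below in terms of $N,R,D$, pigeonhole over $\gamma$ produces $\gamma_n$ with $-\kern-1em\int_{B_R(\tilde p)}\Delta d(\cdot,\gamma_n\tilde p)\,d\tilde m\ge N-1-\Psi(\epsilon,r_n^{-1}|N,D)$. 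Setting $\tilde p_n=\gamma_n\tilde p$ and $x_n=\tilde p$, the distance condition $d(\tilde p_n,x_n)\in(r_n-2R,r_n+2R)$ holds because $\gamma_n^{-1}\tilde p\in A_n$ (adjusting $L$ slightly if necessary), establishing (4.3.1).

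For (4.3.2), for each $y\in B_R(x_n)$ extend a minimizing geodesic from $\tilde p_n$ through (or within $\Psi$ of) $y$ to length $r_n+5R$; since $\tilde X$ is complete and non-compact (the latter following from $h(X)>0$), such an extension exists, and the endpoint $q_n(y)$ satisfies $d(y,q_n(y))+d(y,\tilde p_n)\le r_n+5R+\Psi(\epsilon,r_n^{-1}|N,D)$ by the triangle inequality and $\Psi$-closeness of $y$ to the extended geodesic. The main obstacle is the $\Gamma$-averaging step: the annulus $A_n$ has volume exponentially larger than $\tilde m(B_R(\tilde p))$, so distributing the averaged annular Laplacian estimate into a single $R$-ball hinges on the $\Gamma$-invariance of $\tilde m$ and of the Laplacian, together with careful tracking of the multiplicity $K(N,R,D)$ and of the fundamental-domain mass.
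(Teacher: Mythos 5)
Your treatment of (4.3.1) is, at heart, the same averaging argument the paper uses, just organized around the orbit $\Gamma\tilde p$ instead of around a maximal $R$-separated net in the annulus: the paper packs $A_n=A_{r_n-5R,r_n+5R}(\tilde p)$ with disjoint balls $B_R(\tilde x_i)$, shows via Bishop--Gromov and Lemma 4.7 that most of them satisfy the averaged Laplacian bound, and then applies a deck transformation to move a good center into $B_D(\tilde p)$; your pigeonhole over $\gamma$ is an equivalent bookkeeping. One point to repair: without a systole lower bound the covering $\{B_R(\gamma^{-1}\tilde p)\}$ does \emph{not} have multiplicity bounded by a constant $K(N,R,D)$ (orbit points can cluster arbitrarily), and $\tilde m(B_R(\tilde p))$ has no absolute lower bound since $m$ is an arbitrary Radon measure. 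The argument survives because the large multiplicity is cancelled by the correspondingly large orbit count --- concretely $K\cdot\tilde m(F)\leq\tilde m(B_{R+2D}(\tilde p))\leq C(N,R,D)\,\tilde m(B_R(\tilde p))$ while $\#\{\gamma\}\cdot\tilde m(F)\gtrsim\tilde m(A_n)$ --- but as written the claim is false and the cancellation is the actual content of the step. Also, to keep the summands in your pigeonhole essentially nonnegative you need the Laplacian comparison (Theorem 2.6, not 2.7) applied at the far-away pole $\gamma\tilde p$, which you do invoke.

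The genuine gap is (4.3.2). You derive it from ``geodesic extension in a complete non-compact space,'' but minimizing geodesics in an $\RCD^*$ (or Alexandrov) space need not extend past their endpoints, and non-compactness only yields rays from a point, not extensions of a \emph{given} segment issuing from $\tilde p_n$ through a \emph{prescribed} $y$. More importantly, (4.3.2) is not a soft fact: it is precisely where the almost-maximal entropy enters a second time. The paper introduces $S=\{x\in A_n:\exists z,\ d(\tilde p,z)=d(\tilde p,x)+d(x,z)=r_n+5R\}$ and uses Lemma 4.6 together with Bishop--Gromov to show (estimate (4.8)) that $S$ fills all but a $\Psi$-fraction of the annulus --- if the outer sphere measure $s_{\tilde m}(\tilde p,r_n+5R)$ were not almost maximal relative to $s_{\tilde m}(\tilde p,r_n-5R)$, most of $A_n$ would simply not be reached by geodesics from $\tilde p$ out to radius $r_n+5R$, and (4.3.2) would fail. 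Then ``for each $y$'' (rather than a.e.\ $y$) follows because a ball $B_\delta(y)$ disjoint from $S$ would carry a definite fraction of $\tilde m(B_R(\tilde x_i))$, contradicting $\tilde m(B_R(\tilde x_i)\setminus S)<\eta\,\tilde m(B_R(\tilde x_i))$; this is what produces the $+\Psi$ slack in (4.3.2). Finally, your selection of $\gamma_n$ is made only from the Laplacian condition, so nothing guarantees that the \emph{same} $\gamma_n$ satisfies (4.3.2); the paper's intersection $E'=E_1(\eta)\cap E_2(\eta)$ of the two ``good'' families is exactly the device that makes (4.3.1) and (4.3.2) hold simultaneously, and some analogue of it is needed in your scheme.
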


\begin{Prop}
Let $(X, d, m)$ be a complete $\RCD^*(-(N-1), N)$-space. Given $D>0$, for each $R>2D>0$, if the above (4.3.1) and (4.3.2) hold on $X$, then there is an $\RCD^*(0, N)$-space $Y$ such that $X$ is $\Psi(\epsilon | N, D)$-measured Gromov-Hausdorff close to $\Bbb R\times_{e^r}Y$.
\end{Prop}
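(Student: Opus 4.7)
The strategy is a compactness/contradiction argument. Suppose the conclusion fails; then there exist $D>0$, $R>2D$, and a sequence of complete $\RCD^*(-(N-1), N)$-spaces $(X_i, d_i, m_i, x_i)$ equipped with data $\tilde p_i$, $q_i(\cdot)$ and scales $r_i\to\infty$ verifying (4.3.1)--(4.3.2) with errors $\epsilon_i\to 0$, yet none of which is $\Psi(\epsilon_i\,|\,N,D)$-mGH close to any warped product $\Bbb R\times_{e^r} Y$. Pointed mGH precompactness of $\RCD^*(-(N-1), N)$ yields a subsequence with $(X_i, x_i, m_i)\to (X_\infty, x_\infty, m_\infty)$ and $X_\infty\in\RCD^*(-(N-1), N)$. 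The plan is to produce on $X_\infty$ a function witnessing the hypotheses of Theorem 4.2 and invoke a localized form of that theorem to reach a contradiction.

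The normalized distance functions $f_i(y):=d_i(y, \tilde p_i)-d_i(x_i, \tilde p_i)$ are $1$-Lipschitz on $B_R(x_i)$ with $f_i(x_i)=0$; because $d_i(\tilde p_i, x_i)\geq r_i-2R\to\infty$, and because (4.3.2) supplies for each $y\in B_R(x_i)$ an almost geodesic segment from $\tilde p_i$ through $y$ of length $\geq 5R-\Psi$ beyond $y$, the $f_i$ behave as Busemann approximants for the escaping sequence $\tilde p_i$. Arzel\`a--Ascoli along an mGH realization gives a subsequential uniform limit $f_\infty$ on $B_R(x_\infty)$, which inherits the structure of a Busemann function for the ``point at infinity'' defined by $\tilde p_i$; this is precisely what yields $|\nabla f_\infty|_w=1$ $m_\infty$-a.e.\ on $B_R(x_\infty)$, upgrading the trivial bound $|\nabla f_\infty|_w\leq 1$ coming from lower semicontinuity of the Cheeger energy.

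For the Laplacian identity, the comparison Theorem 2.6 gives
$$\Delta f_i\leq (N-1)\coth\bigl(d_i(\cdot, \tilde p_i)\bigr)\, m_i\leq \bigl(N-1+\Psi(r_i^{-1})\bigr)\, m_i$$
on $B_R(x_i)$, while (4.3.1) supplies the matching average lower bound $-\kern-1em\int_{B_R(x_i)}\Delta f_i\geq N-1-\Psi(\epsilon_i, r_i^{-1})$. A squeezing argument forces $\Delta f_i=(N-1+o(1))\, m_i$ in the sense of measures on $B_R(x_i)$, and standard stability of distributional Laplacians under mGH convergence of RCD spaces passes this to $\Delta f_\infty=(N-1)\, m_\infty$ on $B_R(x_\infty)$. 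A local version of Theorem 4.2, obtained by re-running the Bochner-identity argument of \cite{CDNPSW} on an open ball rather than globally, then shows that $B_{R/2}(x_\infty)$ is isometric to a ball in a warped product $\Bbb R\times_{e^r} Y_\infty$ with $Y_\infty\in\RCD^*(0, N)$, contradicting the initial assumption once $R$ is taken large enough relative to $D$.

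The main obstacle is the upgrade from $|\nabla f_\infty|_w\leq 1$ to the equality $|\nabla f_\infty|_w=1$: lower semicontinuity of the Cheeger energy gives only the inequality, and one must use the almost-geodesic data (4.3.2) to identify $f_\infty$ with a bona fide Busemann function and thereby promote the identity. A secondary subtlety, already implicit in \cite{CDNPSW}, is verifying that their Bochner-based proof of the warped product rigidity extracts the warped product decomposition from the local identities $|\nabla u|_w=1$ and $\Delta u=N-1$ on a ball, rather than only from global identities on $X\setminus\{pt.\}$.
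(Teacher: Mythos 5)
Your overall skeleton matches the paper's: compactness to a limit space, Arzel\`a--Ascoli limits of the normalized distance functions $f_i$, squeezing $\Delta f$ between the Laplacian-comparison upper bound and the averaged lower bound (4.3.1), and using the almost-geodesic data (4.3.2) to force $|\nabla f|_w=1$. On that last point the paper is concrete where you are only suggestive: from (4.3.2) it builds, for each $y\in B_R$ and each $a\in(-R,R)$, a point $y_a$ with $d(y_a,y)=|a|$ and $f_R(y_a)-f_R(y)=a$ (running forward along the almost-geodesic to $q_n^i(y)$ for $a>0$, backward along the geodesic to $p_n^i$ for $a<0$), which yields both the unit slope and the $c$-concavity identity $(af_R)^c=-af_R-\tfrac{a^2}{2}$; the latter is part of what the paper feeds into the warped-product machinery and is missing from your outline.

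The genuine gap is in the final step. You fix a single $R$, obtain $f_\infty$ only on $B_R(x_\infty)$, and then must invoke a ``local version of Theorem 4.2'' to conclude that a ball is isometric to a ball in a warped product. No such local version is available, and localizing the argument of \cite{CDNPSW} is not routine: the decomposition there is extracted from a globally defined flow $\Bbb R\times X\to X$ generated by $f$, and the flow of a function defined only on a ball can exit the ball. Moreover, even granting such a local statement, a warped-product structure on $B_{R/2}(x_\infty)$ for one fixed $R$ does not close your contradiction, since the space in question is non-compact (it is the universal cover in the application) and the conclusion is global mGH closeness of $X$ to $\Bbb R\times_{e^r}Y$. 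The paper avoids both problems by using the hypothesis for \emph{every} $R>2D$: it establishes the three claims for $f_R$ on each $B_R(x_R)$, then lets $R\to\infty$ (Arzel\`a--Ascoli again) to obtain a single function $f$ on all of $X$ with $\Delta f=N-1$, $|Df|=1$, and $af$ $c$-concave for every $a\in\Bbb R$, and applies Theorem 4.2 exactly as stated. You should restructure your argument around this exhaustion rather than appeal to an unproved local rigidity theorem.
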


To prove Proposition 4.4, we need the following properties of Cheeger energy  (\cite[Theorem 6.8]{GMS}):
\begin{Prop}[\cite{GMS}]
Assume $(X_i, d_i, p_i)\in \op{CD}^*(K, N)$ that is measured Gromov-Hausdorff convergent to $(X, d, m)$. The following holds:

{\rm (4.5.1)} If $f_i\in L^2(X_i, m_i)$ is weakly convergent to $f\in L^2(X, m)$ and $\sup \int |f_i|^2dm_i < \infty$, then  
$$\liminf_{i\to \infty}Ch(f_i)\geq Ch(f);$$

{\rm (4.5.2)} For each $f\in L^2(X, m)$, there is a sequence $f_i\in L^2(X_i, m_i)$ such that $f_i$ is convergent to $f$ and 
$$Ch(f)=\lim_{i\to \infty} Ch(f_i).$$
\end{Prop}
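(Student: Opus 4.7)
The plan is to follow the extrinsic realization strategy of Gigli--Mondino--Savar\'e. First I would invoke the standard fact that a measured Gromov--Hausdorff convergent sequence can be realized inside a common proper metric space $(Z,d_Z)$ via isometric embeddings $\iota_i\colon X_i\hookrightarrow Z$ and $\iota\colon X\hookrightarrow Z$ with $(\iota_i)_\ast m_i\rightharpoonup \iota_\ast m$ as Radon measures on $Z$ and $\iota_i(X_i)\to\iota(X)$ in the local Hausdorff topology. This converts the varying-space statement into an extrinsic one in which weak and strong $L^2$--convergence along the family are defined by integrating common test functions on $Z$.

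For the liminf inequality (4.5.1) I would use the relaxation characterization of the Cheeger energy, $2\,Ch(f)=\inf\{\liminf_k \int(\mathrm{Lip}\,g_k)^2\,dm\}$, with infimum over bounded Lipschitz $g_k\to f$ in $L^2$. For each $i$ I pick $g_i$ Lipschitz on $X_i$ with $\int(\mathrm{Lip}\,g_i)^2\,dm_i\leq 2\,Ch(f_i)+i^{-1}$ and $\|g_i-f_i\|_{L^2(m_i)}\leq i^{-1}$, and extend $g_i$ to $Z$ via the McShane extension, preserving the global Lipschitz constant. After truncation at a level $L$, these extensions have uniformly bounded $L^\infty$ norm and slope on bounded balls of $Z$, so Arzel\`a--Ascoli yields a subsequential pointwise limit $g$ on $Z$; weak $L^2$--convergence of $f_i$ to $f$ then forces $g\circ\iota=f$ a.e.\ on $X$. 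The desired inequality reduces to $\int(\mathrm{Lip}\,g)^2\,dm\leq\liminf_i\int(\mathrm{Lip}\,g_i)^2\,dm_i$, which I would obtain from Fatou applied to slopes together with the uniform local doubling of $\{m_i\}$ supplied by Theorem 2.4.

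For the recovery sequence (4.5.2) I would reduce to $Ch(f)<\infty$ and, by the definition of $Ch$, approximate $f$ in $L^2(X,m)$ by bounded Lipschitz $g^{(k)}$ with $\int(\mathrm{Lip}\,g^{(k)})^2\,dm\to 2\,Ch(f)$. Each $g^{(k)}$ extends by McShane to a compactly supported Lipschitz $\tilde g^{(k)}$ on $Z$, after which I set $g^{(k)}_i:=\tilde g^{(k)}\circ\iota_i$. Since the Lipschitz constant is preserved, $\int(\mathrm{Lip}\,g^{(k)}_i)^2\,dm_i$ stays uniformly bounded, and the weak convergence $m_i\rightharpoonup m$ on $Z$ delivers both $g^{(k)}_i\to g^{(k)}$ in $L^2$ and $\int(\mathrm{Lip}\,g^{(k)}_i)^2\,dm_i\to\int(\mathrm{Lip}\,g^{(k)})^2\,dm$ as $i\to\infty$ for each fixed $k$. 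A Cantor diagonal selection then produces $k(i)\to\infty$ such that $f_i:=g^{(k(i))}_i$ converges to $f$ in $L^2$ with $\limsup_i Ch(f_i)\leq Ch(f)$; combined with the liminf from (4.5.1) this squeezes the limit to an equality.

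The main obstacle I anticipate lies in the liminf step: the extended Lipschitz functions live on an ambient $Z$ which is not assumed to be boundedly compact, so extracting a pointwise limit $g$ on $Z$ and then identifying its ambient slope with an honest $m$-a.e.\ Lipschitz slope on $X$ is delicate. The $\op{CD}^\ast(K,N)$ hypothesis enters precisely here through Theorem 2.4, which supplies uniform local doubling of $\{m_i\}$; this furnishes both the tightness needed for compactness of the truncated extensions on balls and the Poincar\'e-type estimate that lets one identify the weak $L^2$ limit of the slopes with a weak upper gradient of $g$ on $X$, closing the proof.
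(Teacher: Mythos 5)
The paper does not actually prove this statement: Proposition 4.5 is imported verbatim from \cite[Theorem 6.8]{GMS} and used as a black box in the proof of Proposition 4.4, so there is no internal proof to compare yours against. Judged on its own terms, your sketch of the recovery sequence (4.5.2) follows the standard line (Lipschitz approximation, McShane extension into the ambient realization $Z$, weak convergence of the pushed-forward measures, diagonal selection) and is plausible modulo the usual care with the fact that the ambient slope of an extension is only upper semicontinuous and may exceed the intrinsic slope on $\iota(X)$.

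The liminf half (4.5.1), however, has two concrete gaps. First, your Arzel\`a--Ascoli extraction is unjustified: the functions $g_i$ are only controlled through $\int(\mathrm{Lip}\,g_i)^2\,dm_i\leq 2\,Ch(f_i)+i^{-1}$, and an $L^2$ bound on slopes gives no uniform Lipschitz constant and hence no equicontinuity; truncating the \emph{values} at level $L$ bounds the $L^\infty$ norm but does nothing to the slopes. Second, and more seriously, ``Fatou applied to slopes'' cannot deliver $\int(\mathrm{Lip}\,g)^2\,dm\leq\liminf_i\int(\mathrm{Lip}\,g_i)^2\,dm_i$: the pointwise slope is not lower semicontinuous under locally uniform convergence (flatten $|x|$ on $[-1/i,1/i]$ to get $g_i\to|x|$ uniformly with $\mathrm{Lip}\,g_i(0)=0$ while $\mathrm{Lip}\,|{\cdot}|(0)=1$), and the integrated inequality across varying measures is essentially the entire content of the theorem, so invoking it at this point is circular. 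In \cite{GMS} the liminf inequality is not obtained from semicontinuity of slopes at all; it rests on the identification of the Cheeger energy through weak upper gradients and test plans and on the stability of that structure under the curvature-dimension hypothesis (this is also where doubling and the Poincar\'e inequality genuinely enter, as you correctly anticipate in your last paragraph, but that observation is not cashed out by the steps you actually wrote down).
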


\begin{proof}[Proof of Proposition 4.4]
By the compactness of $\RCD^*(K, N)$-spaces, consider a sequence of $\RCD^*(-(N-1), N)$-spaces, $(X_i, d_i, m_i, p_i)$ which is measured Gromov-Hausdorff convergent to $(X, d, m, p)$ and satisfies that for $\epsilon_i\to 0$, $R>2D$, there are $r_n\to \infty$,  $x_{n, R}^i\in B_D(p_i)$ such that 

(4.4.1) there is $p_n^i \in X_i$ such that $r_n-2R<d_i(x_{n, R}^i, p_n^i)<r_n+2R$ and for $f_{n, R}^i(y)=d_i(y, p_n^i)-d_i(x_{n, R}^i, p_n^i)$
$$-\kern-1em\int_{B_R(x_{n, R}^i)}\Delta f_{n, R}^i\geq N-1- \Psi(\epsilon_i, r_n^{-1} | N, D);$$

(4.4.2) for each $y\in B_R(x_{n, R}^i)$ there exists $q_n^i(y)\in X_i$ such that $d_i(q_n^i(y),  p_n^i)=r_n+5R$ and 
$$d(y, q_n^i(y))+d(y, p_n^i)\leq r_n+5R+\Psi(\epsilon_i, r_n^{-1} | N, D).$$

We will show that $X=\Bbb R\times_{e^r} Y$ where $Y$ is a $\RCD^*(0, N)$-space.

Assume $x_{n, R}^i\to x_R\in X$ as $n\to \infty, i\to \infty$. Since $f_{n, R}^i$ is $1$-Lipschitz, by Arzela-Ascoli theorem, there is $1$-Lipschitz function $f_R: B_{2R}(x)\to \Bbb R$, such that $f_{n, R}^i\to f_R$ as $n\to \infty, i\to \infty$. 

Claim 1: For $f_R$, we have that in $B_{R}(x_R)$,
$$\Delta f_R=N-1.$$

Let $\phi: X\to \Bbb R$ be a Lipschitz function with $\op{supp}(\phi) \subset\subset B_{2R}(x_R)$. By (4.5.2), there is a sequence of functions $\phi_i: X_i\to \Bbb R$ with $\op{supp}(\phi_i)\subset \subset B_{2R}(x_{n, R}^i)$, $\phi_i\to \phi$ and $Ch(\phi_i)\to Ch(\phi)$. By (4.5.1) and the increasing of 
$$\epsilon \mapsto \frac{|\nabla(g+\epsilon f)|_w^2-|\nabla g|_w^2}{2\epsilon}\overset{\epsilon\to 0}\to \Gamma(f, g)$$
 (cf. \cite{Gi2}), for each $\epsilon > 0$,
\begin{eqnarray*}
\int_X\Gamma(f_R, \phi)& \leq & \int_X \frac{|\nabla(\phi+\epsilon f_R)|_w^2-|\nabla \phi|_w^2}{2\epsilon}=\frac{1}{\epsilon}\left(Ch(\phi+\epsilon f_R)-Ch(\phi)\right)\\
&\overset{(4.5.1)}\leq &\liminf_{n, i\to \infty}\frac{1}{\epsilon}\left(Ch(\phi_i+\epsilon f_{n,R}^i)-Ch(\phi_i)\right)= \liminf_{n, i\to \infty}\int_{X_i} \frac{|\nabla(\phi_i+\epsilon f_{n,R}^i)|_w^2-|\nabla \phi_i|_w^2}{2\epsilon}.
\end{eqnarray*}
In above inequality, let $\epsilon\to 0$, then we have that
$$\int_X\Gamma(f_R, \phi)\leq \liminf_{n, i\to \infty}\int_{X_i}\Gamma(f_{n, R}^i, \phi_i).$$
Replacing $f_R$ and $f_{n, R}^i$ by $-f_R$ and $-f_{n, R}^i$ respectively, then by the same argument as above and the linearity of $\Gamma(\cdot, \cdot)$, we have 
$$-\int_X\Gamma(f_R, \phi)\leq -\limsup_{n, i\to \infty}\int_{X_i}\Gamma(f_{n, R}^i, \phi_i).$$
Then
$$\int_X\Gamma(f_R, \phi)= \lim_{n, i\to \infty}\int_{X_i}\Gamma(f_{n, R}^i, \phi_i).$$
By the generalized Laplacian comparison Theorem 2.6, for $y\in B_{2R}(x_{n, R}^i)$
$$\Delta f_{n, R}^i(y)\leq (N-1)\frac{\cosh d_i(y, p_n^i)}{\sinh d_i(y, p_n^i)}=N-1+\Psi(r_n^{-1} | N, D),$$
and if in addition $\phi,  \phi_i\geq 0$, we have that
$$-\int_X\Gamma(f_R, \phi)=\lim_{n, i\to \infty}\int_{X_i} \phi_id\Delta f_{n,R}^i\leq \lim_{n, i\to \infty}\int_{X_i} \phi_i (N-1+\Psi(r_n^{-1} | N, D))dm_i=\int_X \phi (N-1)dm.$$
Now by Proposition 2.2,  $\Delta f_R$ exists and 
$$\Delta f_R\leq N-1, \text{on } B_{2R}(x_R).$$

For the opposite inequality, take $\delta_j\to 0$ and a sequence of cut off functions as in \cite[Proposition 3.6]{CDNPSW}, $\psi_j: X \to \Bbb R$,
$$\psi_j(x)=\left\{\begin{array}{cc} \delta_j, & x\in \bar{B}_{R-\delta_j}(x_R);\\
R-d(x, x_R), & x\in A_{R-\delta_j, R}(x_R)=B_{R}(x_R)\setminus \bar{B}_{R-\delta_j}(x_R);\\
0, & \text{otherwise},\end{array}\right.$$
then $\psi_j\in W^{1,2}(X, d, m)$.
As above argument, take $\psi_{j}^i :X_i\to \Bbb R$, such that $\psi_j^i\to \psi_j$, $Ch(\psi_j^i)\to Ch(\psi_j)$ as $i\to \infty$ and $\left.\psi_j^i\right|_{\bar{B}_{R-\delta_j}(x_{n, R}^i)}=\delta_j$, $\psi_j^i(x)=0$ for $x\in X_i\setminus B_{R}(x_{n, R}^i)$ and $0\leq \frac{\psi_j^i}{\delta_j}\leq 1$. Then 
\begin{eqnarray*}
\int_{B_{R}(x_R)}\psi_j\Delta f_R & = & \int_{B_{R-\delta_j}(x_R)}\delta_j\Delta f_R + \int_{A_{R-\delta_j, R}(x_R)}\psi_j \Delta f_R\\
& = & \int_{A_{R-\delta_j, R}(x_R)}\Gamma(\psi_j, f_R)=\lim_{n, i\to \infty}\int_{A_{R-\delta_j, R}(x_{n, R}^i)}\Gamma(\psi_j^i, f_{n, R}^i)\\
& =& \lim_{n, i\to \infty}\int_{B_{R-\delta_j}(x_{n, R}^i)}\delta_j\Delta f_{n, R}^i+\int_{A_{R-\delta_j, R}(x_{n, R}^i)}\psi_j^i\Delta f_{n, R}^i.
\end{eqnarray*}
Divide $\delta_j$ in above inequality, 
$$\int_{B_{R-\delta_j}(x_R)}\Delta f_R + \int_{A_{R-\delta_j, R}(x_R)}\frac{\psi_j}{\delta_j} \Delta f_R=\lim_{n, i\to \infty}\int_{B_{R-\delta_j}(x_{n, R}^i)}\Delta f_{n, R}^i+\int_{A_{R-\delta_j, R}(x_{n, R}^i)}\frac{\psi_j^i}{\delta_j}\Delta f_{n, R}^i.$$
Since $0\leq \frac{\psi_j}{\delta_j}\leq 1$, we have that $\frac{\psi_j}{\delta_j}\Delta f_R$ is uniformly absolute continuous with respect to $\Delta f_R$, which is a radon measure. Thus 
$$\int_{A_{R-\delta_j, R}(x_R)}\frac{\psi_j}{\delta_j} \Delta f_R\to 0, \text{ as } \delta_j\to 0.$$
And similarly, 
$$\int_{A_{R-\delta_j, R}(x_{n,R}^i)}\frac{\psi_j^i}{\delta_j} \Delta f_{n,R}^i\to 0, \text{ as } \delta_j\to 0.$$

Now by above discussion and (4.4.1) and let $\delta_j\to 0$, we have that
$$\int_{B_R(x_R)}\Delta f_R=\lim_{n, i\to \infty}\int_{B_R(x_{n, R}^i)}\Delta f_{n, R}^i\geq (N-1)m(B_R(x_R)).$$
Together with $\Delta f_R\leq N-1$ on $B_R(x_R)$, we have that 
$$\Delta f_R=N-1, \text{ in } B_R(x).$$

Claim 2:  For each $a\in (-R, R)$, $af_R$ is $c$-concave and satisfies
$$(af_R)^c= -af_R-\frac{a^2}{2}.$$

As in \cite{Gi}, since $f_R$ is $1$-Lipschitz, for each $x, y\in X$,
$$af_R(x)-af_R(y)\leq |a|d(x, y)\leq \frac{a^2}{2}+\frac{d^2(x, y)}{2}.$$
Now by the definition of $c$-transform (see section 2.2),
$$(af_R)^c(y)=\inf_{x\in X}\frac{d^2(x,y)}{2}-af_R(x)\geq -af_R(y)-\frac{a^2}{2}.$$

For the opposite inequality, note that for each $y_n^i\in B_R(x_{n,R}^i)$, by (4.4.2), there is $q_n^i(y_n^i)\in X_i$ satisfying
$$d(q_n^i(y_n^i), p_n^i)= r_n+5R, \quad d(y_n^i, q_n^i(y_n^i))+d(y_n^i, p_n^i)\leq r_n+5R+\Psi.$$
Then 
\begin{eqnarray}
d(q_n^i(y_n^i), x_{n,R}^i)&\leq & d(q_n^i(y_n^i), y_n^i)+ d(y_n^i, x_{n,R}^i)\leq r_n+5R+\Psi- d(p_n^i, y_n^i)+ R \nonumber\\
&\leq & r_n+5R+\Psi -(d(p_n^i, x_{n,R}^i)-d(x_{n,R}^i, y_n^i))+R \leq r_n+5R+\Psi-r_n+2R + 2R \nonumber\\
&\leq & 9R+\Psi,
\end{eqnarray}
and
\begin{equation}d(q_n^i(y_n^i), y_n^i)\geq d(p_n^i, q_n^i(y_n^i))-d(p_n^i, x_{n, R}^i)-d(x_{n, R}^i, y_n^i)\geq r_n+5R-r_n-2R-R=2R,\end{equation}
where $\Psi=\Psi(\epsilon_i, r_n^{-1} | N, D)$. 

Fixed $y\in B_R(x_R)$, 
by (4.3), we may assume $y_n^i\to y$ and $q_n^i(y_n^i)\to q(y)$. Let $\gamma_n^i : [0, d(y_n^i, q_n^i(y_n^i))]\to X_i$ be a unit speed minimizing geodesic from $y_n^i$ to $q_n^i(y_n^i)$. 

For any $0\leq a\leq R$, by (4.4), we can take $\gamma_n^i(a)$ in $\gamma_n^i$. Now using (4.4.2) again,
$$r_n+5R\leq d(p_n^i, \gamma_n^i(a))+d(\gamma_n^i(a), q_n^i(y_n^i))\leq d(p_n^i, y_n^i)+a+d(\gamma_n^i(a), q_n^i(y_n^i))\leq r_n+5R+\Psi,$$
we have that
\begin{equation}d(p_n^i, y_n^i)+a\geq d(p_n^i, \gamma_n^i(a))\geq d(p_n^i, y_n^i)+a-\Psi.\end{equation}
Assume $\gamma_n^i(z)\to y_a\in X$, then
$$d(y_a, y)=a.$$
And by (4.5),
$$f_R(y_a)-f_R(y)=\lim_{n, i\to \infty} f_{n, R}^i(\gamma_n^i(a))-f_{n, R}^i(y_n^i)=\lim_{n, i\to \infty} d(p_n^i, \gamma_n^i(a))-d(p_n^i, y_n^i)=a.$$
By the definition of $c$-transform, we have that
$$(af_R)^c\leq \frac{d^2(y_a, y)}{2}-af_R(y_a)=\frac{a^2}{2}-a(a+f_R(y))=-\frac{a^2}{2}-af_R(y).$$

For the case $-R<a<0$, as the discussion in \cite{Gi, Gi2}, take a point $y_n^i(a)=\gamma'(d(p_n^i, y_n^i)+a)$ in the unit speed minimal geodesic $\gamma'$ from $p_n^i$ to $y_n^i$ as the behavior of $q_n^i(y_n^i)$. As the same argument as above, we have 
$$d(y_n^i(a), x_{n, R}^i)\leq d(y_n^i(a), y_n^i)+d(y_n^i, x_{n, R}^i)\leq 2R.$$
Assume $y_n^i(a)\to y'_a\in X$, 
then 
$$d(y'_a, y)=-a.$$
And thus
$$f_R(y'_a)-f_R(y)=a;$$
$$(af_R)^c\leq \frac{d^2(y'_a, y)}{2}-af_R(y'_a)=\frac{(-a)^2}{2}-a(a+f_R(y))=-\frac{a^2}{2}-af_R(y).$$

Claim 3:  $|D f_R|=1$ on $B_R(x)$, where $D f_R(y)=\limsup_{z\to y}\frac{|f_R(z)-f_R(y)|}{d(z, y)}$.

This argument is similar as the one in \cite{MN}. First, $|D f_R|\leq 1$. And by above argument, for each $y\in B_R(x)$, for $a\to 0$, there is $y_a$ such that $d(y_a, y)=a$ and $f_R(y_a)-f_R(y)=a$, then 
$$|D f_R|(y)=\limsup_{z\to y}\frac{|f_R(z)-f_R(y)|}{d(z, y)}\geq \lim_{a\to 0}\frac{|f_R(y_a)-f_R(y)|}{d(y_a, y)}=1.$$

Since for each $R$, there is $f_R$ on $B_R(x)$ such that the above three claims holds, by Arzela-Ascoli theorem, we may assume that $f_R\to f$. And $f$ satisfies that

(i) $\Delta f=N-1$ on $X$;

(ii) $|Df|=1$ on $X$;  And by \cite{Cheeger}, for $\RCD^*$-space $|Df|=|\nabla f|_w$.

And for each $a \in (-R, R)$, since $f$ is $1$-Lipschitz and $f_R\to f$,
$$(af)^c=\inf_{x\in X}\left(\frac{d^2(x, y)}{2}-af(x)\right)\leq \inf_{x\in B_R(x_R)}\left(\frac{d^2(x, y)}{2}-af(x)\right)\leq -\frac{a^2}{2}-af(y),$$
we have that

(iii) for each $a\in \Bbb R$, $af$ is $c$-concave and $(af)^c=-af-\frac{a^2}{2}$.

Now by Theorem 4.2, $X$ is isometric to a warped product space $\Bbb R\times_{e^r} Y$, $Y$ is an $\RCD^*(0, N)$-space. \end{proof}

Note that in the proof of \cite[Theorem 1.2]{CDNPSW}, they used \cite{AT}'s result to derive the existence of regular Lagrangian flow and then use the regular Lagrangian flow to define the map $\Bbb R\times \tilde X\to \tilde X$. Here one may use the $c$-concavity of $a f$ for each $a\in \Bbb R$ and the results in \cite{GRS} to construct the map $\Bbb R\times \tilde X\to \tilde X$ (cf. \cite{Gi2}).

To prove Proposition 4.3, note that it is a kind of quantitative version of \cite[Lemma 3.4]{CDNPSW} and thus their proofs are similar. Here we will give a rough proof of Proposition 4.3 following the same line as \cite[Lemma 3.4]{CDNPSW}. 

First as \cite[Proposition 3.5]{CDNPSW}, we have 
\begin{Lem}
Let $(X, d, m)$ be as in Proposition 4.3 and let $(\tilde X, \tilde d, \tilde m)$ be its universal cover. Then for $\tilde p\in \tilde X$, $R>2D$, there exist $r_n\to \infty$ such that
$$\lim_{n\to \infty}\frac{s_{\tilde m}(\tilde p, r_n+5R)}{s_{\tilde m}(\tilde p, r_n-5R)}\geq e^{10R(N-1-\epsilon)}.$$
\end{Lem}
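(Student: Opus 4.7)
I would argue by contradiction. Set $S(r) := s_{\tilde m}(\tilde p, r)$, and suppose the conclusion fails, so
$$\limsup_{r\to\infty} \frac{S(r+5R)}{S(r-5R)} < e^{10R(N-1-\epsilon)}.$$
Then there exist $\delta > 0$ and $r_0 > 0$ with
$$S(r+5R) \leq (1-\delta)\, e^{10R(N-1-\epsilon)}\, S(r-5R) \qquad \forall\, r \geq r_0.$$
Iterating along $r = r_0 + 10Rk$ for $k = 1,2,\ldots$ gives the geometric decay
$$S(r_0 + (10k+5)R) \leq [(1-\delta)\,e^{10R(N-1-\epsilon)}]^k\, S(r_0 + 5R).$$

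To pass from the sampled radii to all $t$, I would use that $(\tilde X, \tilde d, \tilde m) \in \RCD^*(-(N-1), N)$ by Theorem 2.9, so Theorem 2.4 gives the monotonicity of $S(r)/\sinh^{N-1}(r)$. Combined with the above, this yields on each interval $I_k := [r_0 + (10k-5)R, r_0 + (10k+5)R]$ a uniform bound
$$S(t) \leq C\, [(1-\delta)\, e^{10R(N-1-\epsilon)}]^{k-1} \qquad \forall\, t \in I_k,$$
where $C$ absorbs a factor $e^{10R(N-1)}$ coming from the Bishop--Gromov interpolation. The same monotonicity guarantees that $r \mapsto \tilde m(B_r(\tilde p))$ is absolutely continuous with derivative $S(r)$ almost everywhere, so integrating over each $I_k$ and summing the geometric series produces
$$\tilde m(B_r(\tilde p)) \leq C' \sum_{k \leq r/(10R)} 10R\, [(1-\delta)\, e^{10R(N-1-\epsilon)}]^{k-1} \leq C''\, (1-\delta)^{r/(10R)}\, e^{r(N-1-\epsilon)}.$$

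Taking $\log$, dividing by $r$, and letting $r \to \infty$, since $\ln(1-\delta) < 0$,
$$\limsup_{r\to\infty} \frac{\ln \tilde m(B_r(\tilde p))}{r} \leq N-1-\epsilon + \frac{\ln(1-\delta)}{10R} < N-1-\epsilon,$$
which contradicts the hypothesis $h(X) \geq N-1-\epsilon$. Once the $\limsup$ inequality is established, any sequence $r_n \to \infty$ along which the ratio $S(r_n+5R)/S(r_n-5R)$ converges to its $\limsup$ realizes the claimed limit.

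I do not expect a serious obstacle. The only slightly delicate point is that $s_{\tilde m}$ is defined as a $\limsup$ of difference quotients, so one must appeal to Bishop--Gromov monotonicity (rather than any pointwise differentiation formula) both to interpolate between sampled radii and to identify $\int_0^r S\,dt$ with $\tilde m(B_r(\tilde p))$; the rest is the classical ``sphere growth controls ball growth'' comparison.
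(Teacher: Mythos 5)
Your argument is correct, and it is essentially the proof the paper intends: the paper simply defers to \cite[Lemma 4.2]{CRX}, whose proof is exactly this contradiction scheme (assume the sphere ratio is uniformly below $e^{10R(N-1-\epsilon)}$, iterate, interpolate with Bishop--Gromov, sum the geometric series, and contradict $h(X)\geq N-1-\epsilon$). The points you flag as delicate (local Lipschitzness of $r\mapsto\tilde m(B_r(\tilde p))$ via Theorem 2.4, so that the ball volume is controlled by $\int S$) are handled correctly.
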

The proof of Lemma 4.6 is the same as \cite[Lemma 4.2]{CRX} where they only used the almost maximal volume entropy condition.

And by \cite[Proposition 3.6]{CDNPSW}, for $r(y)=d(\tilde p, y)$ and for all but countably $t_2\geq t_1>0$,
\begin{equation}\int_{A_{t_1, t_2}(\tilde p)}\Delta r=s_{\tilde m}(\tilde p, t_2)-s_{\tilde m}(\tilde p, t_2).\label{lap-vol}\end{equation}
 By Lemma 4.6, Theorem 2.6 and \eqref{lap-vol}, we have that
\begin{Lem}
Let $\tilde X$ be as in Lemma 4.6 and let $r(y)$ be as above. Then for $A_n=A_{r_n-5R, r_n+5R}(\tilde p)$,
$$\left|-\kern-1em\int_{A_n}\Delta r-(N-1)\right|\leq \Psi(\epsilon, r_n^{-1} |N, R).$$
\end{Lem}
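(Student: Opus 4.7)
The plan is to bound $-\kern-1em\int_{A_n}\Delta r$ from above and from below separately, each by $N-1$ up to an error of the claimed form $\Psi(\epsilon, r_n^{-1}\,|\,N, R)$.

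The upper bound is immediate from the generalized Laplacian comparison (Theorem 2.6). With $K=-(N-1)$ and $H=-1$ it reads $\Delta r \leq (N-1)\coth(r)\cdot\tilde m$ on $\tilde X\setminus\{\tilde p\}$, and for every $y\in A_n$ one has $r(y)\geq r_n-5R$, hence $\coth(r(y))\leq \coth(r_n-5R)=1+\Psi(r_n^{-1}\,|\,R)$. Integrating over $A_n$ and dividing by $\tilde m(A_n)$ gives the one-sided estimate.

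For the matching lower bound, write $\sigma(t):=s_{\tilde m}(\tilde p, t)$. Equation \eqref{lap-vol} (read with its evident typo corrected to $\sigma(t_2)-\sigma(t_1)$) gives
$$\int_{A_n}\Delta r=\sigma(r_n+5R)-\sigma(r_n-5R),$$
so Lemma 4.6 supplies
$$\int_{A_n}\Delta r\geq \sigma(r_n-5R)\bigl(e^{10R(N-1-\epsilon)}-1-o(1)\bigr).$$
On the other hand, the generalized Bishop–Gromov comparison (Theorem 2.4, again with $H=-1$) says that $t\mapsto \sigma(t)/\sinh^{N-1}(t)$ is non-increasing. Therefore, for $t\in[r_n-5R,r_n+5R]$,
$$\sigma(t)\leq \sigma(r_n-5R)\cdot\frac{\sinh^{N-1}(t)}{\sinh^{N-1}(r_n-5R)}=\sigma(r_n-5R)\cdot e^{(N-1)(t-(r_n-5R))}\bigl(1+\Psi(r_n^{-1}\,|\,N,R)\bigr),$$
and integrating in $t$ yields
$$\tilde m(A_n)=\int_{r_n-5R}^{r_n+5R}\sigma(t)\,dt\leq \sigma(r_n-5R)\cdot\frac{e^{10R(N-1)}-1}{N-1}\bigl(1+\Psi(r_n^{-1}\,|\,N,R)\bigr).$$
Dividing the two displayed inequalities, the common factor $\sigma(r_n-5R)$ cancels and one obtains
$$-\kern-1em\int_{A_n}\Delta r\geq \frac{(N-1)\bigl(e^{10R(N-1-\epsilon)}-1\bigr)}{\bigl(e^{10R(N-1)}-1\bigr)\bigl(1+\Psi(r_n^{-1}\,|\,N,R)\bigr)}-\Psi(r_n^{-1}\,|\,N,R),$$
whose right-hand side tends to $N-1$ as $\epsilon\to 0$ and $r_n\to\infty$ with $N,R$ fixed.

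The argument is thus a direct combination of \eqref{lap-vol}, Lemma 4.6, Theorem 2.4, and Theorem 2.6. The only real work is book-keeping the three small quantities that enter — the Bishop–Gromov distortion of $\sigma$ across an annulus of thickness $10R$, the $o(1)$ lurking inside Lemma 4.6, and the term $\coth(r_n-5R)-1$ — and checking that each can be written as a $\Psi$ depending only on $\epsilon$ and $r_n^{-1}$ once $N$ and $R$ are held fixed; this is where one must be a bit careful, but no new idea is required.
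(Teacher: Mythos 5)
Your proof is correct and follows essentially the same route the paper intends: the upper bound from the Laplacian comparison (Theorem 2.6), and the lower bound by combining \eqref{lap-vol} with Lemma 4.6 and the Bishop--Gromov comparison (Theorem 2.4) to control $\tilde m(A_n)$ by $s_{\tilde m}(\tilde p, r_n-5R)\cdot\frac{e^{10R(N-1)}-1}{N-1}$, so that the ratio tends to $N-1$. The only point worth flagging is that \eqref{lap-vol} holds for all but countably many radii, so one should note that $r_n\pm 5R$ can be taken (after a negligible perturbation) outside the exceptional set; otherwise the bookkeeping is exactly as required.
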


Now fixed $n$. Let $E$ be the maximal subset of $\tilde x_i \in A_n$ such that for $i\neq j$, $B_{R}(\tilde x_i)\cap B_R(\tilde x_j)=\emptyset$. We will find a subset $E'\subset E$, such that for each $\tilde x_i\in E'$, (4.3.1) and (4.3.2) hold.

Let $F=\bigcup_{\tilde x_i\in E}B_R(\tilde x_i)$, then by Theorem 2.4 and the doubling property, we have that 
\begin{equation}\frac{\tilde m(F)}{\tilde m(A_n)}\geq c(N, R)>0. \end{equation}

Let $S=\{x\in A_n,\, \exists z\in \tilde X, d(\tilde p, z)=d(\tilde p, x)+d(x, z)=r_n+5R\}$. Let 
$E_1(\eta)=\{\tilde x_i\in E,\, \frac{\tilde m(B_R(\tilde x_i)\setminus S)}{\tilde m(B_R(\tilde x_i))}<\eta\}$ and let $F_1(\eta)=\bigcup_{\tilde x_i\in E_1(\eta)}B_R(\tilde x_i)$. Then again by Theorem 2.4 and Lemma 4.6, as \cite[Lemma 5.5]{CRX}, we have that
\begin{equation}\frac{\tilde m(F_1(\eta))}{\tilde m(F)}\geq 1-\eta^{-1}\Psi(\epsilon | N, R). \end{equation}

Let $E_2(\eta)=\left\{\tilde x_i\in E,\, \left|-\kern-1em\int_{B_R(\tilde x_i)}\Delta r-(N-1)\right|\leq \eta^{-1}\Psi(\epsilon, r_n^{-1} |N, R)\right\}$ and let $F_2(\eta)=\bigcup_{\tilde x_i\in E_2(\eta)}B_R(\tilde x_i)$. Then by Lemma 4.7 and (4.7), as \cite[Lemma 5.8]{CRX},
\begin{equation}\frac{\tilde m(F_2(\eta))}{\tilde m(F)}\geq 1-\eta. \end{equation}

Take $\eta$ suitable small and let $E'=E_1(\eta)\cap E_2(\eta)$. Now by  (4.7)-(4.9) and consider a deck transformation of $\tilde X$, Proposition 4.3 is derived.

\subsection{Non-collapsing and Free limit group action}

Let $(X_i, d_i, m_i)$ be as in (4.1), (4.2). Then by Theorem 3.1 and Theorem 4.1, we have the following diagram:
\begin{equation}\begin{array}[c]{ccc}
(\tilde X_i,\tilde p_i,\Gamma_i)&\xrightarrow{GH}&(\Bbb H^k,\tilde p,G)\\
\downarrow\scriptstyle{\tilde \pi_i}&&\downarrow\scriptstyle{\tilde \pi}\\
(\hat X_i, \hat p_i, \hat \Gamma_i) & \xrightarrow{GH} & (\hat X, \hat p, \hat G)\\
\downarrow\scriptstyle{\hat \pi_i}&&\downarrow\scriptstyle{\hat \pi}\\
(X_i,p_i)&\xrightarrow{GH} &(X, p)
\end{array}\end{equation}
where by Theorem 3.1, there exist $\Gamma_{i\epsilon}\to G_0$, $G_0$ is the identity component, $\hat X_i=\tilde X_i/\Gamma_{i\epsilon}$, $\hat \Gamma_i=\Gamma_i/\Gamma_{i\epsilon}$, $\hat p_i=\tilde \pi_i(\tilde p_i)$, $\hat X=\Bbb H^k/G_0$, $\hat G=G/G_0$ and $\hat \Gamma_i\overset{isom}\cong \hat G$.

In this subsection, we will show that under the additional condition (a) or (b), the isometric action $G$ is discrete and free. Recall that

(a) $\{X_i\}$ are Alexandrov spaces with curvature $\geq -1$ (which implies Theorem 1.1);

(b) $\op{Haus}^N(X_i)>v>0$ for all $i$ (which implies Theorem 1.2).

First, we show that $G$ is discrete.

 Condition(b) implies that $G_0=\{e\}$ and thus $G$ is discrete.  With the condition (a), we will use the following generalized Margulis lemma in Alexandrov spaces proved by Xu-Yao \cite{XY}.
\begin{Thm}[\cite{XY}]Given $N$, there are $\epsilon(N), c(N)>0$, such that for an $N$-dimensional Alexandrov space $X$ with curvature $\geq -1$, for $x\in X$, the subgroup of fundamental group at $x$ that generated by loops of length $\leq \epsilon(N)$ contains a nilpotent subgroup with index $\leq c(N)$.
\end{Thm}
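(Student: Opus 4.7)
The plan is to argue by contradiction and rescaling, following the Kapovitch--Wilking scheme for the generalized Margulis lemma under lower Ricci bounds, but exploiting the stronger Alexandrov structure (splitting theorem, gradient flow of semiconcave functions, structure of isometry group actions on Alexandrov spaces) so that one can induct on dimension. Suppose the statement fails for some fixed $N$: then there exist $\epsilon_i\to 0$ and $c_i\to \infty$, $N$-dimensional Alexandrov spaces $X_i$ with curvature $\geq -1$, points $x_i\in X_i$, and subgroups $\Gamma_i\leq \pi_1(X_i,x_i)$ generated by loops of length $\leq \epsilon_i$ which contain no nilpotent subgroup of index $\leq c_i$.

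After rescaling $X_i$ by $\epsilon_i^{-1}$, the curvature lower bound becomes $-\epsilon_i^2\to 0$, and the generators of $\Gamma_i$ become loops of length $\leq 1$. Working on the universal cover $\tilde X_i$ with a lifted basepoint $\tilde x_i$ and the deck action of $\Gamma_i$, one extracts an equivariant Gromov--Hausdorff subsequential limit
$$(\tilde X_i,\tilde x_i,\Gamma_i)\xrightarrow{eqGH}(Y,y_0,G),$$
where $Y$ is a non-negatively curved Alexandrov space of dimension $\leq N$ and $G$ is a closed group of isometries of $Y$. By Gleason--Montgomery--Zippin together with the no-small-subgroups property (equivalently, Fukaya--Yamaguchi), $G$ is a Lie group, and by construction every element of $G$ near the identity is a limit of generators of $\Gamma_i$, so $G$ is generated by $G(1)$.

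The heart of the argument is to prove that $G$ is virtually nilpotent with controlled index, and to transfer this conclusion down to $\Gamma_i$ for $i$ large. Let $G_0$ be the identity component. If $G_0\neq \{e\}$, then $G_0$ acts isometrically on the non-negatively curved Alexandrov space $Y$; the orbit space $Y/G_0$ is again Alexandrov with curvature $\geq 0$, its dimension drops strictly, and $G/G_0$ acts on it. One can then apply the induction hypothesis to the quotient, and use that $G_0$ itself, as a connected Lie group acting isometrically on a non-negatively curved Alexandrov space, has a nilpotent (in fact abelian, on the splitting factor provided by the Toponogov splitting theorem applied to orbits of one-parameter subgroups) structure of bounded complexity. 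If $G_0=\{e\}$, then $G$ is discrete and one is in position to apply Gromov's polynomial growth theorem to $\Gamma_i$ (using Bishop--Gromov volume comparison in the Alexandrov setting to bound the number of elements of $\Gamma_i$ in balls), producing a nilpotent subgroup of bounded index, which contradicts the choice of the sequence. To close the chain, one uses a zoom-in/zoom-out argument: at each intermediate scale produced by equivariant GH-convergence, one identifies a new normal subgroup of $\Gamma_i$ with controlled index (this is where Theorem~3.1 and the short generators machinery enter) and continues the induction.

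The main obstacle is making the induction rigorous, namely the bookkeeping of the index bound $c(N)$ across the scale changes and dimension drops. Each application of the inductive hypothesis multiplies the index constant by a factor depending on the previous dimension, so one must verify that this finite process terminates with a uniform bound depending only on $N$. A secondary obstacle, specific to Alexandrov spaces as opposed to smooth manifolds, is verifying that the equivariant limit group $G$ correctly detects the virtual nilpotence of $\Gamma_i$ for large $i$; this requires the gap/escape lemma (any element of $\Gamma_i$ not close to the identity in $G$ moves the basepoint by a definite amount), whose Alexandrov proof rests on the non-branching of geodesics and on gradient flow estimates for distance-type semiconcave functions near $\tilde x_i$.
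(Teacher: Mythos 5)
First, note that the paper does not prove this statement at all: it is Theorem 4.8, quoted from Xu--Yao \cite{XY} and used as a black box to conclude that the identity component $G_0$ of the limit group in diagram (4.10) is nilpotent. So there is no in-paper argument to compare yours against; the relevant comparison is with the proof in \cite{XY}, which does adapt the Kapovitch--Wilking scheme to Alexandrov spaces, so your roadmap (contradiction, rescaling by $\epsilon_i^{-1}$, equivariant Gromov--Hausdorff limits, Lie group structure of the limit, induction with zoom-in/zoom-out across scales) points in the right direction.

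As a proof, however, your proposal has genuine gaps at exactly the places where the real work lies. (1) In the case $G_0=\{e\}$ you invoke Gromov's polynomial growth theorem to produce ``a nilpotent subgroup of bounded index''; Gromov's theorem is not effective in this sense --- it yields virtual nilpotence with no uniform control of the index in terms of $N$ alone, and obtaining the uniform constant $c(N)$ is precisely the content of the Margulis lemma. One needs either the Breuillard--Green--Tao structure theory of approximate groups or the full Kapovitch--Wilking induction theorem controlling both the nilpotency rank and the index simultaneously across scales; a Bishop--Gromov count of group elements in balls does not substitute for this. (2) In the case $G_0\neq\{e\}$, the assertion that $G_0$ ``has a nilpotent (in fact abelian) structure of bounded complexity'' is stated, not proved; a connected isometry group of a non-negatively curved Alexandrov space need not be nilpotent (consider $SO(n)$ acting on $\Bbb R^n$), so one must first pass to the subgroup generated by elements with small displacement and rerun the rescaling argument before any such structure becomes available. (3) The bookkeeping you flag as ``the main obstacle'' --- propagating the index bound through finitely many dimension drops and scale changes --- is essentially the theorem itself; identifying it as an obstacle does not discharge it. In short, your proposal is a reasonable description of the strategy of \cite{XY}, but it is not a proof of the statement, and in the context of this paper the correct move is simply to cite \cite{XY}, as the author does.
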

By Theorem 4.8, and (4.10), we know that $G_0$ is nilpotent. And \cite[Theorem 2.5]{CRX} says that there is no nontrivial connected nilpotent isometric group on $\Bbb H^k$, i.e., $G$ is discrete. Note that if one can show the generalized Margulis lemma in $\RCD^*$-spaces, one can drop the condition (b) in Theorem 1.2.

Second, using the same argument as \cite[Theorem 4.7]{CRX}, we know that $k=N$.

Now, to complete Theorem 1.1 and 1.2, we only need to show that the discrete group $G$ acts freely on $\Bbb H^N$.

For a metric measure space $(X, d, m, x)$ and $r>0$, consider the rescaled and renormalized metric measure space  $(X, r^{-1}d, m_r^x, x)$, where 
$$m_r^x=\left(\int_{B_r(x)}1-\frac{d(x, y)}{r}dm(y)\right)^{-1}m.$$
If $(X, d, m)\in\RCD^*(K, N)$, then $(X, r^{-1}d, m_r^{x})\in \RCD^*(r^2K, N)$.

Recall that in the proof of the free limit action \cite[Theorem 2.1]{CRX}, we need two fact: (i) the universal cover is non-collapsing and the volume convergence theorem holds which guarantees that the limit action is trivial iff the convergent sequence of isometric actions is trivial; (ii) the uniform Reifenberg condition (i.e., there is $(\epsilon_i, l_i)$, such that for each $\tilde y_i\in \tilde X_i$, $d_{GH}(B_s(\tilde y_i), \underline{B}_s^0)\leq \epsilon_i s$ for $s<l_i$ where $\underline{B}_s^0\subset \Bbb R^N$) which guarantees that for each point $\tilde y_i\in \tilde X_i$ with any rescalings  $r_i\to 0$, $(X_i, r_i^{-1}d_i, y_i)\overset{GH}\to (\Bbb R^N, d_{\Bbb R^N}, 0)$. And then we could use the properties of isometric actions on $\Bbb R^N$ to derive that the limit isometric group action has no fixed point. In (4.10), the fact (i) holds for we have known $k=N$. For the fact (ii), we have the following,

\begin{Thm}
Assume a sequence of $\RCD^*(K, N)$-spaces, $(X_i, d_i, m_i, x_i)$ is measured Gromov-Hausdroff convergent to an $N$-dimensional Riemannian manifold $(M, g, x)$ with sectional curvature bounded above by $H'$. Then for each sequence $r_i\to 0$, if $(X_i, r^{-1}_i d_i, m_{r_i}^{x_i}, x_i)$ is measured Gromov-Hausdorff convergent to $(Y, d, m, y)$, we have that $(Y, d, m, y)=(\Bbb R^n, d_{\Bbb R^N}, c\op{Haus}^N, 0)$, where $c=\left(\int_{B_1(0)}1-|y|d\op{Haus}^N(y)\right)^{-1}$.
\end{Thm}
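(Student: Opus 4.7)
\emph{Strategy.} The plan is to identify the given pmGH limit $(Y,d,m,y)$ with the tangent cone of $M$ at $x$, which must be Euclidean: since $M$ is a smooth $N$-manifold with $\op{sec}\leq H'$, its tangent cone at $x$ is $(\mathbb{R}^N,d_{\mathbb{R}^N},c\op{Haus}^N,0)$. The hypothesis that the mGH limit $M$ is a smooth $N$-manifold makes the convergence $X_i\to M$ non-collapsed, and this transfers the Euclidean tangent cone structure from $M$ back to the rescalings of $X_i$ via volume monotonicity and cone rigidity. A direct diagonal matching of Gromov--Hausdorff approximations in $X_i\to M$ is not available at the microscopic scale $r_i\to 0$, since the approximation errors $\delta_i$ need not satisfy $\delta_i/r_i\to 0$; the argument must instead go through ratios of volumes of balls.

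\emph{Execution.} At $x\in M$ the exponential map, together with $\op{sec}\leq H'$ and positive injectivity radius, gives Rauch-type comparison, and the Riemannian volume satisfies $\op{vol}_g(B_s(x))=\omega_N s^N(1+O(s^2))$ for $s$ small. Rescaling by $s^{-1}$ and letting $s\to 0$ yields
\[
(M,\, s^{-1}d_M,\, m_s^x,\, x)\longrightarrow (\mathbb{R}^N,\, d_{\mathbb{R}^N},\, c\op{Haus}^N,\, 0)\quad\text{in pmGH as }s\to 0,
\]
where $c$ is exactly the constant in the statement (the density of $m$ with respect to $\op{vol}_g$ is absorbed into the renormalization in $m_s^x$). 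Next, for any fixed $s$ small enough that $m(\partial B_s(x))=0$, the pmGH convergence $X_i\to M$ gives $m_i(B_s(x_i))\to m(B_s(x))$; hence for $0<s'<s$ both small, the ratios $m_i(B_{s'}(x_i))/m_i(B_s(x_i))$ are bounded below by $(s'/s)^N$ (from the generalized Bishop--Gromov monotonicity of Theorem 2.4) and converge as $i\to\infty$ to $m(B_{s'}(x))/m(B_s(x))$, which itself tends to $(s'/s)^N$ as $s\to 0$. Passing to the rescaled limit, the limit space $(Y,d,m,y)\in\RCD^*(0,N)$ satisfies $m(B_{s'}(y))/m(B_s(y))=(s'/s)^N$ for all $0<s'<s$. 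The volume cone rigidity of Theorem 2.8 then forces $(Y,d,m)$ to be a metric cone over some $Z\in\RCD^*(N-2,N-1)$; the volume computation forces the $(N-1)$-dimensional volume of $Z$ to equal that of the standard $S^{N-1}$, and the maximal volume rigidity on $\RCD^*(N-2,N-1)$ gives $Z=S^{N-1}$, so $(Y,d)=(\mathbb{R}^N,d_{\mathbb{R}^N})$; the normalization in $m_{r_i}^{x_i}$ then pins the limit measure down to $c\op{Haus}^N$.

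\emph{Main obstacle.} The delicate point is the transfer of regularity from a fixed macroscopic scale (where the pmGH convergence $X_i\to M$ is effective) down to the microscopic scale $r_i\to 0$. The non-collapsing of the limit, guaranteed by the smoothness of $M$, is precisely what enables this, by converting a metric-matching problem into a volume-ratio problem to which the Bishop--Gromov monotonicity applies automatically. The bounded sectional curvature of $M$ provides both a quantitative rate in the volume expansion $m(B_s(x))=\omega_N\rho_m(x)s^N(1+O(s^2))$ and a positive injectivity radius at $x$, which initialize the argument. Once $(Y,d,m,y)$ is recognized as an $\RCD^*(0,N)$ metric cone with maximal cross-section volume, the final identification with $(\mathbb{R}^N,d_{\mathbb{R}^N},c\op{Haus}^N,0)$ reduces to standard $\RCD^*$ rigidity.
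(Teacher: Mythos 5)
Your argument agrees with the paper's up to the point where $(Y,d,m,y)$ is identified as a metric cone $C(Z)$ over some $Z\in\RCD^*(N-2,N-1)$: both proofs convert the problem into one about volume ratios, use the two-sided curvature control on $M$ to show $m(B_{s'}(y))/m(B_s(y))=(s'/s)^N$ in the limit, and invoke the volume-cone-to-metric-cone theorem (Theorem 2.8). The correct observation that one cannot compose GH approximations at scale $r_i$ and must instead pass through Bishop--Gromov monotonicity is exactly the paper's mechanism.

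However, your final step contains a genuine gap. You claim that ``the volume computation forces the $(N-1)$-dimensional volume of $Z$ to equal that of the standard $S^{N-1}$,'' and then appeal to a maximal-volume rigidity to get $Z=S^{N-1}$. But the renormalized measures $m_{r_i}^{x_i}$ carry no absolute volume information whatsoever --- the normalization is precisely designed to discard it --- and the spaces $X_i$ are general $\RCD^*(K,N)$ spaces whose reference measures $m_i$ need not be $\op{Haus}^N$, so no volume-convergence theorem is available to recover it. A cone over a circle of length $2\pi\theta$ with $\theta<1$ (equipped with the cone measure) satisfies $m(B_{s'})/m(B_s)=(s'/s)^2$ identically, so the volume-ratio identity you establish is consistent with a non-Euclidean cone; nothing in your argument rules this out. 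The paper closes this gap differently: it reruns the volume-ratio computation centered at an arbitrary point $z_i\to z\in C(Z)$ (legitimate because the balls $B_{r_i(r+s_i)}(z_i)$ are still microscopic balls near $x$, where $M$ is uniformly smooth), concluding that $C(Z)$ is a metric cone with vertex at \emph{every} point; two distinct vertices produce a line, the splitting theorem (Theorem 2.7) applies, and iterating gives $C(Z)=\Bbb R^N$. You would need to replace your maximal-volume step with this (or an equivalent) argument for the proof to go through.
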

\begin{proof}
Since $(X_i, d_i, m_i, x_i)\overset{mGH}\to (M, g, x)$, by the compactness of $\RCD^*(K, N)$ spaces and \cite[Theorem 1.7]{St2} we have that $(M, g)$ has Ricci curvature $\geq K$ and for $0<r<R<1$,
$$\left|\frac{m_i(B_r(x_i))}{m_i(B_R(x_i))}-\frac{\volume(B_r(x))}{\volume(B_R(x))}\right|\to 0,\, i\to \infty.$$

Then as $i\to \infty$, by volume comparison, we have that 

\begin{eqnarray*}
\left|\frac{m_{r_i}^{x_i}(B_r(x_i))}{m_{r_i}^{x_i}(B_R(x_i))}-\left(\frac{r_ir}{r_iR}\right)^N\right| & \leq &\left|\frac{m_{r_i}^{x_i}(B_r(x_i))}{m_{r_i}^{x_i}(B_R(x_i))}-\frac{\volume(B_{r_ir}(x))}{\volume(B_{r_iR}(x))}\right|\\
& & +\max\left\{\left|\frac{\svolball{H}{r_ir}}{\svolball{H}{r_iR}}-\left(\frac{r}{R}\right)^N\right|, \left|\frac{\svolball{H'}{r_ir}}{\svolball{H}{r_iR}}-\left(\frac{r}{R}\right)^N\right|\right\}\\
&\to & 0,
\end{eqnarray*}
where $\svolball{H}{r}$ is the volume of the ball with radius $=r$ in the simply connected $N$-space form with sectional curvature $=H$ and $H=\frac{K}{N-1}$.
And thus 
$$\frac{m(B_r(y))}{m(B_R(y))}=\left(\frac{r}{R}\right)^N.$$

Note that $(Y, d, m)\in \RCD^*(0, N)$ and the above equality holds for $R\to \infty$ ($Rr_i<1$). Thus by Theorem 2.8, we have that $(Y, d, m, y)$ is isometric to $(C(Z), d_{C(Z)}, m_{C(Z)}, y)$, where $Z\in \RCD^*(N-2, N-1)$, $y$ is the cone vertex. 

If $C(Z)$ is not isometric to $\Bbb R^N$, then as the discussion in \cite[Theorem 9.69]{Ch}, take $z\in C(Z)$, $z_i\in (X_i, r_i^{-1}d_i, m_{r_i}^{x_i})$, such that $z_i\to z$. Let $s_i=r_i^{-1}d_i(z_i, x_i)$. Then $s_i\to d_{C(Z)}(z, y)=s$.  As above we have that 
$$\frac{m_{r_i}^{x_i}(B_{r+s_i}(z_i))}{m_{r_i}^{x_i}(B_{R+s_i}(z_i))}\to \left(\frac{r+s}{R+s}\right)^N.$$
Again by Theorem 2.8, passing to a subsequence, we have $(X_i, r_i^{-1}d_i)\supset B_{R+s_i}(z_i)\to B_{R+s}(z)$, where $B_{R+s}(z)$ is a ball in a cone with vertex $z$. Thus $C(Z)$ can be also viewed as a cone with vertex at $z$. Then a ray from $z$ to $y$ can be extended to a line. By splitting Theorem 2.7, we have $C(Z)=\Bbb R^N$, a contradiction.
\end{proof}

\end{document}